\DeclareMathOperator*{\argmin}{arg\,min}
\title{The Deep Minimizing Movement Scheme}
\author{
Min Sue Park\footnotemark[2]\,\,\footnotemark[1]
\and Cheolhyeong Kim\footnotemark[2]\,\,\footnotemark[1]
\and Hwijae Son\footnotemark[3]\,\,\footnotemark[1]
\and Hyung Ju Hwang\footnotemark[2]\,\,\footnotemark[4]}
\begin{document}
\maketitle

\renewcommand{\thefootnote}{\fnsymbol{footnote}}
\footnotetext[2]{Department of Mathematics, Pohang University of Science and Technology, Pohang 790-784, Republic of Korea (\email{minsuepark@postech.ac.kr},\email{tyty4@postech.ac.kr},\email{hjhwang@postech.ac.kr}).}
\footnotetext[3]{Stochastic Analysis and Application Research Center, Korea Advanced Institute of Science and Technology, Daejeon, Republic of Korea (\email{son9409@kaist.ac.kr}).}
\footnotetext[1]{Joint first authors}
\footnotetext[4]{Corresponding Author}
\renewcommand{\thefootnote}{\arabic{footnote}}

\begin{abstract}
    Solutions of certain partial differential equations (PDEs) are often represented by the steepest descent curves of corresponding functionals. Minimizing movement scheme was developed in order to study such curves in metric spaces. Especially, Jordan-Kinderlehrer-Otto studied the Fokker-Planck equation in this way with respect to the Wasserstein metric space. In this paper, we propose a deep learning-based minimizing movement scheme for approximating the solutions of PDEs. The proposed method is highly scalable for high-dimensional problems as it is free of mesh generation. We demonstrate through various kinds of numerical examples that the proposed method accurately approximates the solutions of PDEs by finding the steepest descent direction of a functional even in high dimensions.
\end{abstract}

\begin{keywords}
    Minimizing Movement Scheme, JKO Scheme, Neural Networks
\end{keywords}

\begin{AMS}
    68T07  
\end{AMS}

\section{Introduction}\label{intro}
Starting from the pioneering concerns given in \cite{de1980problems}, the theory of gradient flow evolution in general metric spaces has been intensively studied to unify many problems in calculus of variations, differential equations, mean curvature flow, etc. In this trend, \textit{minimizing movement scheme} \cite{de1992movimenti} was devised as a general method for the study of steepest descent curves of a functional $F$ in a metric space $(X,d_X)$, and this method was extensively applied to various differential equations as a time-implicit semi-discrete scheme.

Historically, the application of minimizing movement scheme to differential equations was exclusively based on gradient flows arising from a separable Hilbert space, particularly $L^2(\mathbb{R}^n)$. Later, in \cite{jordan1998variational}, Jordan, Kinderlehrer, and Otto observed that the Fokker-Planck equation can be understood as a steepest descent for the free energy with respect to the Wasserstein distance and utilized the minimizing movement scheme for Wasserstein spaces, and that is why the minimizing movement scheme is now widely known as the JKO-scheme (Jordan, Kinderlehrer and Otto). In recent years, it has been known that a large class of diffusion equations can be variationally reformulated as gradient flows in $L^2$-Wasserstein spaces, and the minimizing movement scheme has now become a popular tool for deriving numerical schemes for these equations. In addition to the minimizing movement scheme, another branch of numerical scheme for gradient flow called Scalar Auxiliary Variable (SAV) scheme has been recently proposed and developed \cite{shen2018scalar, zhang2022generalized, jiang2022improving}.

Neural networks have been considered as candidates for approximators of the solutions of PDEs in past decades. The earliest work dates back to the 1990s, for example, \cite{lagaris1998artificial,dissanayake1994neural}. However, due to the limitation of computing resources and the absence of an efficient training algorithm, the proposed neural network solvers were often undervalued at that time. Along with the recent advances in deep learning theory such as the back-propagation and the stochastic gradient descent algorithm, several approaches for solving PDEs via neural networks have been revisited. One major line of research is Physics Informed Neural Networks (PINNs) \cite{raissi2019physics}, which proposed to train a neural network to minimize the sum of PDE residual functions. Plenty of works are then reported regarding the convergence property of continuous loss formulation \cite{NHM,hwang2020trend,lee2020model}, dealing with the exact imposition of the boundary condition \cite{berg2018unified,muller2021notes}, works on improving training efficiency \cite{son2021sobolev,wang2020and,mcclenny2020self,van2020optimally}, and applications to high-dimensional problems \cite{sirignano2018dgm} to name a few. Another approach, called the deep Ritz method, reformulates a PDE problem into an optimization problem of a variational functional \cite{weinan2018deep}, and a corresponding convergence result is reported in \cite{muller2019deep}. Neural network solutions of variational problems with essential boundary conditions are studied in \cite{liao2019deep}, and a pre-training strategy for the deep Ritz method condition is proposed in \cite{courte2021robin}.

Traditional numerical schemes such as Finite Difference Method (FDM), Finite Element Method (FEM), Finite Volume Method (FVM) require fine mesh generation for an accurate approximation. Since mesh generation and computation become intractable as the computational dimension increases, they often fail to solve PDEs in high dimensions \cite{lu2021deepxde}. On the other hand, the aforementioned neural network methods have successful approximations in high-dimensional settings. For example, \cite{sirignano2018dgm} proposed an iterative sampling technique together with the stochastic gradient descent to tackle the high-dimensional PDEs, and \cite{sirignano2018dgm,son2021sobolev} showed that neural networks could accurately approximate the solution in high dimensions. Additionally, \cite{weinan2018deep} considers variational problems in high dimensions for elliptic PDEs. In this paper, we consider several high-dimensional PDEs as example problems to demonstrate the empirical success of our method.

Optimizing a functional and finding a minimizer have been widely studied both in the machine learning community and in the PDE literature. Recently, optimization of a functional defined on a space of probability measures equipped with $L^2$-Wasserstein distance arises as an important problem. Therefore, Wasserstein gradient flow plays an important role in studying the steepest descent of a target functional in the space of probability measures. 
\cite{alvarez2021optimizing} and \cite{mokrov2021large} proposed to applying JKO scheme with neural networks with tractable algorithms. However, their usage of JKO scheme mainly focused on a fast approximation of the steepest descent of the target functional, not on the accurate prediction of a whole dynamic system. For example, the experiment results of \cite{alvarez2021optimizing} on the Wasserstein gradient flow type PDEs were mainly discussed on its \textit{asymptotic convergence of the objective functions}, but the accurate prediction of the solution $u(t,x)$ was not mentioned.
Meanwhile, \cite{liu2020neural} proposed to solve Fokker-Planck equation with neural networks by emerging JKO scheme, Wasserstein Gradient Flow, and its geometric interpretation. However, its usage is limited to Fokker-Planck equation, since \cite{liu2020neural} projected the Fokker-Planck equation onto a parametric manifold with a new metric tensor and viewed the equation as a new ODE system on the manifold. 

In this paper, we propose a neural network model which reflects the minimizing movement scheme for solving PDEs. Our proposed method has several advantages in the following aspects. Firstly, our method is more suitable for solving high-dimensional problems than the traditional mesh-based schemes \cite{liu2020neural} because the  mesh generation is not necessary for the proposed method, and neural networks are capable of handling high-dimensional objects in general. Secondly, while specifying boundary condition often causes a problem in both residual minimization \cite{muller2021notes} and the deep Ritz method \cite{courte2021robin}, it often follows naturally from the minimizing movement scheme in our method. Therefore, we do not need to explicitly specify boundary conditions for those PDEs derivable from the minimizing movement scheme. Lastly, our model easily enables time extrapolation of the numerical solution in contrast to residual minimization; we only need to solve one more optimization problem for one-step time extrapolation, while the residual minimization requires additional training in the whole time-space domain. Furthermore, an increment of the error at each time step is bounded by $\mathcal{O}(\tau^{1/2})$, where $\tau$ denotes the step size. See, theorem \ref{theorem4}.




\section{Preliminaries}\label{preliminary}

\subsection{Gradient flow}
In general, given a Riemannian manifold $(M,g)$, a curve $u:(0,\infty) \rightarrow M$ is said to be the \textit{gradient flow} of a smooth functional $\mathcal{F}:M \rightarrow \mathbb{R}$ if it follows the direction in which $\mathcal{F}$ decreases at most. Formally, this definition can be interpreted by the evolution equation 
\begin{equation}\label{gf_def}
\frac{d}{dt} u(t) = -\nabla_M \mathcal{F}(u(t)).
\end{equation}

If $M$ is a Hilbert space, the gradient of $\mathcal{F}$ is given by its functional derivative, which is derived from the Riesz representation of the Fréchet derivative $D\mathcal{F}$ of $\mathcal{F}$: $D\mathcal{F}(x)(y) = \langle \nabla_M \mathcal{F}(x), y \rangle$ for every $x,y \in M$. Therefore, for $\Omega \subset \mathbb{R}^n$, $\nabla_{L^2(\Omega)}\mathcal{F}(u)$ is merely the functional derivative $\frac{\delta \mathcal{F}}{\delta u}(u)$ since $L^2(\Omega)$ is itself a Hilbert space.

For example, the heat equation $\frac{d}{dt}u(t) = \Delta u$ is the gradient flow of the Dirichlet energy $\mathcal{F}(u):=\frac{1}{2}\int ||\nabla u||^2 dx$ in $L^2(\mathbb{R}^n)$ because  $\nabla_{L^2(\mathbb{R}^n)}\mathcal{F}(u) = \frac{\delta \mathcal{F}}{\delta u}(u) = -\Delta u$.

As illustrated in the above example, to reformulate an evolution equation into its variational form as in (\ref{gf_def}), finding an appropriate Riemannian manifold $(M,g)$ would be helpful. An important and interesting example of such Riemannian manifold was observed in \cite{otto2001geometry} that there is a metric tensor $g$ which induces the $L^2$-Wasserstein metric $\mathbb{W}_2$ on $\mathcal{P}_2(\Omega)$, the space of Borel probability distributions on $\Omega \subset \mathbb{R}^n$ with finite second moments. Here, $\Omega$ is assumed to be either an open bounded subset of $\mathbb{R}^n$ with smooth boundary or $\mathbb{R}^n$. Abusing notation and identifying the absolutely continuous measure $u$ with its density as $du(x) = u(x)dx$, it was shown in \cite{otto2001geometry} that, under some assumptions on $\mathcal{F}$, the gradient $\nabla_{\mathbb{W}_2}\mathcal{F}(u)$ can be explicitly expressed by
\begin{equation}
\nabla_{\mathbb{W}_2}\mathcal{F}(u) = -\nabla \cdot (u \nabla \frac{\delta \mathcal{F}}{\delta u}(u)).
\end{equation}

In general, many interesting linear functionals $\mathcal{F}$ on $(\mathcal{P}_2(\mathbb{R}^n), \mathbb{W}_2)$ are given as linear combinations of three types of basic functionals; potential energy $\mathcal{V}(u):=\int V u $, interaction energy $\mathcal{W}(u):= \int (W\ast u)u$, and internal energy  $\mathcal{E}(u):=\int u\phi(u)$, where these energies are defined for absolutely continuous $u$'s and defined by $\infty$ otherwise. For example, the heat equation $\frac{d}{dt}u(t) = \nabla u$ is also the gradient flow of the entropy $\mathcal{E}:(\mathcal{P}_2(\mathbb{R}^n),\mathbb{W}_2) \rightarrow [0,+\infty]$ defined by $\int u(x) \log u(x) dx$ for absolutely continuous $u$ and $\infty$ otherwise, because $\nabla \cdot (u\nabla \frac{\delta \mathcal{E}}{\delta u}(u)) = \Delta u$.

Examples of basic functionals and their corresponding gradient flows on $L^2(\mathbb{R}^n)$ and $(\mathcal{P}_2(\Omega),\mathbb{W}_2)$ are given in Table \ref{Tab:ef}.

\begin{table}[]
\centering
\begin{tabularx}{0.9\textwidth}{c|X|X|}
\cline{2-3}
\textbf{}                                 & \multicolumn{1}{c|}{\textbf{Functional}} & \multicolumn{1}{c|}{\textbf{Gradient Flow}} \\ \hline
\multicolumn{1}{|c|}{\multirow{3}{*}{$L^2(\mathbb{R}^n)$}} & $\mathcal{F}(u)=\int \phi(u)$                  & $\frac{d}{dt}u(t)= - \phi'(u)$                     \\
\multicolumn{1}{|c|}{}                    & $\mathcal{F}(u)=\int ||\nabla u||^2$                  & $\frac{d}{dt}u(t) = \Delta u$                      \\
\multicolumn{1}{|c|}{}                    & $\mathcal{F}(u)=\int u(x)f(x)$                   & $\frac{d}{dt}u(t)=-f(x)$                      \\ \hline
\multicolumn{1}{|c|}{\multirow{4}{*}{$\mathbb{W}_2$}} & $\mathcal{F}(u)=\int u \log u$                  & $\frac{d}{dt}u(t) = \Delta u$                     \\
\multicolumn{1}{|c|}{}                    & $\mathcal{F}(u)=\frac{1}{m-1}\int u^m$                   & $\frac{d}{dt}u(t) = \Delta u^m$                      \\
\multicolumn{1}{|c|}{}                    & $\mathcal{F}(u)=\int Vu$                   & $\frac{d}{dt}u(t)=\nabla \cdot (\nabla V u)$                      \\
\multicolumn{1}{|c|}{}                    & $\mathcal{F}(u)=\int (W\ast u)u$                   & $\frac{d}{dt}u(t)=\nabla \cdot(\nabla(W\ast u) u)$                      \\ \hline
\end{tabularx}
\caption{Examples of functionals and their corresponding gradient flows.}
\label{Tab:ef}
\end{table}

\subsection{Minimizing movement scheme (JKO scheme)}
A metric space $(X,d_X)$ is said to be a geodesic space if $d_X(x,y)$ is a geodesic between $x$ and $y\in X$ for every $x$ and $y\in X$. It is well-known that if a metric space $X$ is a geodesic Polish space, then $(\mathcal{P}_2(X), \mathbb{W}_2)$ is a geodesic Polish space as well, and any Hilbert space is a geodesic space \cite{ambrosio:hal-00769391}. Therefore, for any convex subset $\Omega \subset \mathbb{R}^n$, $L^2(\Omega)$ and $(\mathcal{P}_2(\Omega), \mathbb{W}_2)$ are geodesic Polish spaces.

Let $(X,d_X)$ be a geodesic Polish space with Riemannian structure and $F:X\rightarrow \mathbb{R}\cup \{+\infty\}$ be a lower semi-continuous $\lambda$-geodesically convex function (under suitable compactness assumptions to guarantee the existence of a minimum), and iteratively define
\begin{equation}
    x_{k+1}^\tau = \argmin_{x\in X} \{\mathcal{F}(x) + \frac{d_X(x,x_k^\tau)^2}{2\tau}\},  \quad x_0^\tau = x_0,
\end{equation}
for $\tau >0$ and some initial point $x_0 \in X$ such that $E(x_0)<+\infty$.

This scheme is said to be the \textit{minimizing movement scheme}. If we define a curve such that $x^\tau(k\tau) = x_k^\tau$ and such that $x^\tau$ restricted on any interval $[k\tau, (k+1)\tau]$ is a constant-speed geodesic with speed $\frac{d(x_k^\tau, x_{k+1}^\tau)}{\tau}$, up to a subsequence $\tau_j\to 0$, then $x^\tau$ converges uniformly to the solution of (\ref{gf_def}) when some regularity conditions on $\mathcal{F}$ and $(X,d_X)$ are assumed.

Moreover, a gradient flow derived through the minimizing movement scheme in $L^2(\Omega)$ is always accompanied by the boundary condition $\frac{\delta \phi}{\delta (\nabla u)} \cdot \hat{\boldsymbol{n}}=0$ on $\partial \Omega$ when the functional $\mathcal{F}$ is given in the form $\mathcal{F}(u):=\int \phi(u,\nabla u)$, if $\Omega$ is not $\mathbb{R}^n$ itself \cite{braides2014local}. Likewise, a gradient flow derived through the minimizing movement scheme in $(\mathcal{P}_2(\Omega), \mathbb{W}_2)$ is always accompanied by no-flux boundary condition on $\partial \Omega$, i.e. $u \nabla \frac{\delta \mathcal{F}}{\delta u}(u) \cdot \hat{\boldsymbol{n}} = 0$, if $\Omega$ is not $\mathbb{R}^n$ itself \cite{santambrogio2015optimal}.

In particular, the gradient flows of the heat equation and the Allen-Cahn equation derived through the minimizing movement scheme in the space $L^2(\Omega)$ satisfy the Neumann boundary condition \cite{braides2014local,mizuno2015convergence}. Similarly, the gradient flow of the heat equation derived through the minimizing movement scheme in the space $(\mathcal{P}_2(\Omega), \mathbb{W}_2)$ satisfies the Neumann boundary condition \cite{ambrosio:hal-00769391}. In this paper, we consider Neumann boundary conditions when $\Omega \neq \mathbb{R}^n$.

\subsection{Optimal transport and \texorpdfstring{$\boldsymbol{L^2}$}{TEXT}%
-Wasserstein distance}\label{optimaltransport}

Let $\mu,\nu$ be Borel probability measures on Polish spaces $X, Y$ respectively. Given a cost function $c:X\times Y\rightarrow [0,+\infty]$, where $c(x,y)$ measures the cost of transporting one unit of mass from $x\in X$ to $y\in Y$, \textit{the optimal transport problem} is how to tranposrt $\mu$ to $\nu$ while minimizing the cost $c$. This was firstly formulated by Monge \cite{monge1781memoire} as follows:
\begin{equation}\label{Monge}
   (MP) \quad \inf_{T_\ast \mu = \nu} \int_X c(x,T(x))d \mu(x),
\end{equation}
where $T:X\rightarrow Y$ is a measurable map (transport map).

However, this problem is ill-posed because of the constraint $T_\ast \mu = \nu$. For instance, if $\mu$ is a Dirac measure and $\nu$ is not, there is no admissible $T$. Later, Kantorovich proposed the following way to relax the Monge problem:
\begin{equation}\label{Kantorovich}
    (KP) \quad \min_{\gamma \in \Pi(\mu, \nu)} \int_{X\times Y} c(x,y) d\gamma(x,y),
\end{equation}
where $\Pi(\mu,\nu)$ is the collection of joint distributions (transport plans) of $\mu$ and $\nu$.

Moreover, if $\mu$ is non-atomic and $c$ is continuous, then there always exist optimal transport plans satisfying (\ref{Kantorovich}) and it coincides with the infimum of (\ref{Monge}) \cite{pratelli2007equality}. In particular, this is the case when $X=Y=\mathbb{R}^n$ and $c(x,y):=||x-y||_2^2$. The $L^2$\textit{-Wasserstein distance} between $\mu$ and $\nu$ is defined to be the square root of the minimum of (\ref{Kantorovich}) when $c(x,y):=||x-y||_2^2$, and is denoted by $\mathbb{W}_2(\mu,\nu)$.


Estimation of $L^2$-Wasserstein distance is essential for solving PDEs with JKO scheme and applying optimal transport in a machine learning area. Previous studies \cite{taghvaei20192,makkuva2020optimal} proposed to estimate this $L^2$-Wasserstein distance by applying Kantorovich-Rubinstein duality theorem \cite{villani2003topics} and approximating primal/conjugate convex functions. However, their computations were based on minimax optimization, which leads to a heavy computational cost and a slow estimation speed.

Meanwhile, the authors in \cite{korotin2019wasserstein} proposed Wasserstein-2 generative networks (W2GN), an end-to-end non-minimax algorithm for training optimal transport mappings for $L^2$-Wasserstein distance. They avoided the minimax optimization by introducing a regularization term on the convex conjugate potential. Instead of optimizing minimax problem, W2GN simply minimizes the cost and controls the conjugate function. Compared with the previous minimax-based computation algorithms in \cite{taghvaei20192,makkuva2020optimal}, W2GN significantly improves the computational cost for estimating $L^2$-Wasserstein distance.

In the rest of this subsection, we briefly describe the method for estimating $\mathbb{W}_2(\mu,\nu)$, that was proposed in \mbox{\cite{korotin2019wasserstein}}.

When absolutely continuous distributions $u,\rho$ with finite second moments are given, by re-arranging the Monge's formulation (\ref{Monge}), $\mathbb{W}_2(u,\rho)$ can be expressed by 
\begin{equation}\label{middle_step}
    \mathbb{W}_2^2(u,\rho)=\int_{\mathbb{R}^n} ||x||^2 du(x) + \int_{\mathbb{R}^n} ||y||^2 d\rho(y)- 2\cdot \max_{T_\ast u = \rho} \int_{\mathbb{R}^n} \langle x, T(x)\rangle du(x).
\end{equation}
We denote the maximum term on the right hand side of (\ref{middle_step}) by $\text{Corr}(u,\rho)$, and $-\text{Corr}(u,\rho)$ can be regarded as (\ref{Monge}) for $c(x,y):=-\langle x, y \rangle$ \cite{mccann1995existence}. Therefore, by applying the Kantorovich-Rubinstein duality theorem, we have
\begin{equation}\label{corr}
    \text{Corr}(u,\rho) = \min_{\psi\in \text{Convex}}\bigg[\int_{\mathbb{R}^n} \psi(x) du(x) + \int_{\mathbb{R}^n} \overline{\psi}(y)d\rho(y)\bigg],
\end{equation}
where the minimum is taken over all the convex functions $\psi:\mathbb{R}^n \rightarrow \mathbb{R}\cup\{+\infty\}$, and $\overline{\psi}(y):=\sup_{x\in \mathbb{R}^n}(\langle x, y \rangle - \psi(x))$ is the Fenchel convex conjugate to $\psi$.

According to \cite{villani2003topics,mccann1995existence}, the gradient $T^*:=\nabla \psi^*$ of the optimal $\psi^*$ readily gives the maximizer of (\ref{middle_step}), and we have
\begin{equation}\label{relation}
    (T^*)^{-1}(y)=(\nabla \psi^*)^{-1}(y) = \nabla \overline{\psi^*}(y).
\end{equation}
Therefore, it suffices to find the minimizer $\psi$ of (\ref{corr}) to evaluate the $L^2$-Wasserstein distance $\mathbb{W}_2(u,\rho)$. However, to do this, we need to solve the optimization sub-problem $\overline{\psi}(y)=\max_{x\in \mathbb{R}^n} (\langle x, y \rangle - \psi(x))$, and this would lead to a minimax problem
\begin{align}\label{minimaxeq}
    &\min_{\psi\in\text{Convex}}\bigg[ \int_{\mathbb{R}^n} \psi(x) du(x) + \int_{\mathbb{R}^n} \bar{\psi}(y) d\rho(y) \bigg] \nonumber \\
    &= \min_{\psi\in\text{Convex}}\bigg[ \int_{\mathbb{R}^n} \psi(x) du(x) + \int_{\mathbb{R}^n} [\max_{x\in\mathbb{R}^n} \langle x , y \rangle - \psi(x)] d\rho(y) \bigg] \nonumber \\
    &= \min_{\psi\in\text{Convex}}\bigg[ \int_{\mathbb{R}^n} \psi(x) du(x) + \max_{S:\mathbb{R}^n\rightarrow\mathbb{R}^n}\int_{\mathbb{R}^n} [\langle S(y) , y \rangle - \psi(S(y))] d\rho(y) \bigg]
\end{align}
where the maximum is taken over arbitrary measurable functions $S$.

Nevertheless, \cite{korotin2019wasserstein} proposed a trick to convert this problem into an end-to-end non-minimax problem by considering the optimal $\psi$ and $\bar{\psi}$. For the variational approximation in (\ref{minimaxeq}), we obtain a variational lower bound which matches the entire value for $S=(\nabla \psi)^{-1}(y) = \nabla \overline{\psi}(y)$. 

Therefore, the primal potential $\psi$ and its conjugate $\overline{\psi}$ can be approximated by two parametrized convex functions $\psi_\theta$ and $\overline{\psi_\omega}$ respectively, if we minimize the following objective:
\begin{equation}\label{middle}
    \min_{\theta, \omega}\bigg[ \int_{\mathbb{R}^n} \psi_\theta(x) du(x) + \int_{\mathbb{R}^n} [\langle \nabla \overline{\psi_\omega}(y),y\rangle - \psi_\theta(\nabla \overline{\psi_\omega}(y))] d\rho(y) \bigg].
\end{equation}
Moreover, the relation (\ref{relation}) must be imposed on the optimized pair $(\psi_\theta,\overline{\psi_\omega})$, and this can be done by additionally minimizing the following regularization:
\begin{equation}
    R(\theta,\omega):= \int_{\mathbb{R}^n} ||\nabla \psi_\theta \circ \nabla \overline{\psi_\omega}(y) - y ||^2 d\rho(y).
\end{equation}
In conclusion, our final objective would be the following:
\begin{equation}\label{W2objective}
    \min_{\theta, \omega}\bigg[ \int_{\mathbb{R}^n} \psi_\theta(x) du(x) + \int_{\mathbb{R}^n} [\langle \nabla \overline{\psi_\omega}(y),y\rangle - \psi_\theta(\nabla \overline{\psi_\omega}(y))] d\rho(y) + \lambda\cdot R(\theta,\omega) \bigg],
\end{equation}
where $\lambda>0$ is a hyperparameter.

Using the identities (\ref{middle_step}) and (\ref{corr}), once the optimal $\psi_{\theta^*}, \overline{\psi_{\omega^*}}$ are found, $\mathbb{W}_2(u,\rho)$ can now be approximated by the following:
\begin{equation}\label{W2approx}
    \int_{\mathbb{R}^n} ||x||^2 du(x) + \int_{\mathbb{R}^n} ||y||^2 d\rho(y)- 2\cdot \bigg[\int_{\mathbb{R}^n} \psi_{\theta^*}(x) du(x) + \int_{\mathbb{R}^n} \overline{\psi_{\omega^*}}(y)d\rho(y)\bigg]
\end{equation}

The remaining obstacle is to find the optimal $\psi_{\theta^*}$ and $\overline{\psi_{\omega^*}}$ with a tractable computation. In order to find them and to use them as approximated solutions of $\psi^*$ and $\overline{\psi^*}$, a large enough but tractable space of parametrized convex functions should be introduced. Here, we used input convex neural network (ICNN) architectures to approximate the optimal convex functions, which were firstly presented in \cite{amos2017input}. Recently, ICNN architectures are widely chosen to approximate the optimal convex potential and its conjugate in the previous studies \cite{taghvaei20192,makkuva2020optimal,korotin2019wasserstein,alvarez2021optimizing,mokrov2021large}. Among them, a fully-convex, $k$-layer, fully-connected ICNN (FICNN) architecture was proposed
\begin{align*}
    z_{i+1} = g_i(W_i^{(z)}z_i + W_i^{(y)}y + b_i),\quad 0\leq i \leq k-1,\quad  W_0^{(z)} = 0,
\end{align*}
where $y$ denotes an input, $g_i$ denotes the activation function, and $z_k$ denotes the final output. Let $\theta := \{W_{0:k-1}^{(y)},W_{1:k-1}^{(z)},b_{0:k-1}\}$ be the parameters of $k$-layer FICNN $f(y;\theta)=z_k$. The authors in \cite{amos2017input} claimed that the function $f$ is convex with respect to $y$ given $W_{1:k-1}^{(z)}$ are non-negative and $g_i$ are non-decreasing convex activation functions. Following the method of \cite{korotin2019wasserstein}, the non-negativity of the necessary parameters in our experiments with FICNN can be attained by clipping each parameter to be non-negative for each gradient descent update.

\subsection{Universal approximation theorem}
The approximation theory of the neural network has been widely studied in past decades after a seminal work \cite{cybenko1989approximation} which states the universal approximation property of one hidden layer neural network with sigmoidal activations.

The original result had been generalized in many directions. For example, \cite{hornik1991approximation} showed the approximation property for more general activation functions.
\begin{theorem}[Theorem 2 in \cite{hornik1991approximation}]\label{theorem2}
    Whenever $\sigma$ is continuous, bounded and non-constant, the finite sums of the form \begin{equation}\label{nn_dense}
        G(x) = \sum_{j=1}^{N} a_j \sigma(y_j^T x + \theta_j) 
    \end{equation} are dense in $C(X)$ for all compact sets $X\subset \mathbb{R}^n$.
\end{theorem}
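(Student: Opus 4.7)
The plan is to combine Stone--Weierstrass on trigonometric polynomials with a one-dimensional reduction that lets us replace each cosine by a finite sum of the form \eqref{nn_dense}. Let $N_\sigma(X)\subset C(X)$ denote the linear span of $\{x\mapsto \sigma(y^T x+\theta): y\in\mathbb{R}^n,\theta\in\mathbb{R}\}$. The goal is to show $\overline{N_\sigma(X)}=C(X)$.

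First I would establish density of trigonometric polynomials. Let $T\subset C(X)$ be the algebra generated by $\{x\mapsto \cos(w^T x+\phi):w\in\mathbb{R}^n,\phi\in\mathbb{R}\}$. Because
\begin{equation*}
\cos(A)\cos(B)=\tfrac{1}{2}\bigl(\cos(A-B)+\cos(A+B)\bigr),
\end{equation*}
$T$ is already a linear space and an algebra; it contains the constants (take $w=0$, $\phi=0$), is closed under conjugation, and separates points of $X$ (take $w$ along coordinate axes with $\phi=0$ and $\phi=\pi/2$). For compact $X\subset\mathbb{R}^n$, the Stone--Weierstrass theorem then yields $\overline{T}=C(X)$. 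Hence it suffices to show that every function $x\mapsto \cos(w^T x+\phi)$ is uniformly approximable on $X$ by elements of $N_\sigma(X)$.

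Next I would reduce this to a one-dimensional density statement. Fix $w,\phi$ and set $K_w:=\{w^T x:x\in X\}$, a compact interval. If I can prove the span $M_\sigma(K_w)$ of $\{t\mapsto \sigma(\lambda t+\theta):\lambda,\theta\in\mathbb{R}\}$ is dense in $C(K_w)$, then any approximant $\sum_j a_j\sigma(\lambda_j t+\theta_j)$ of $t\mapsto \cos(t+\phi/\!|w|)$ on $K_w$ lifts to an approximant $\sum_j a_j\sigma(\lambda_j w^T x+\theta_j)\in N_\sigma(X)$ of $x\mapsto \cos(w^T x+\phi)$, with the same sup norm error. So the task collapses to proving: for every continuous, bounded, non-constant $\sigma:\mathbb{R}\to\mathbb{R}$ and every compact $K\subset\mathbb{R}$, the set $M_\sigma(K)$ is dense in $C(K)$.

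Finally I would prove this one-dimensional density by Hahn--Banach and Riesz representation. Suppose for contradiction $M_\sigma(K)$ is not dense; then there is a non-zero signed Borel measure $\mu$ on $K$ with
\begin{equation*}
\int_K \sigma(\lambda t+\theta)\,d\mu(t)=0\qquad\text{for all }\lambda,\theta\in\mathbb{R}.
\end{equation*}
Mollifying: for $\varphi\in C_c^\infty(\mathbb{R})$, Fubini gives $\int_K (\sigma*\widetilde\varphi)(\lambda t+\theta)\,d\mu(t)=0$ as well, where $\sigma*\widetilde\varphi$ is smooth and bounded. Because $\sigma$ is continuous and non-constant, one can choose $\varphi$ so that $\sigma*\widetilde\varphi$ is itself non-constant (otherwise every mollifier would kill the variation, forcing $\sigma$ constant). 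Differentiating in $\theta$ then produces smooth bounded functions $h$ (all derivatives of $\sigma*\widetilde\varphi$) satisfying $\int_K h(\lambda t+\theta)\,d\mu(t)=0$. Taking further mollifications and exploiting translations/dilations, one obtains a Schwartz function in the kernel, and a Fourier argument then yields $\int_K e^{i\xi t}\,d\mu(t)\equiv 0$, so $\mu=0$, the desired contradiction.

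The main obstacle is the last step: pushing the hypothesis \emph{bounded and non-constant} all the way to vanishing of the Fourier--Stieltjes transform of $\mu$. The delicate point is that $\sigma$ is not assumed smooth, integrable, or sigmoidal, so one cannot directly invoke the discriminatory property used in Cybenko's original argument; the mollification trick, combined with the fact that non-constancy survives some convolution, is what unlocks the Fourier calculation and distinguishes this generalization from the sigmoidal case.
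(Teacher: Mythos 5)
The paper states this as Theorem 2 of Hornik (1991) and cites it without proof, so there is no in-paper argument to compare against; I therefore assess your proposal on its own merits. Your first three moves are sound and can be made rigorous: Stone--Weierstrass applied to the cosine algebra (which by the product-to-sum identity is already a linear span, so it suffices to approximate each generator $\cos(w^Tx+\phi)$); the ridge reduction along $t=w^Tx$ to a one-dimensional density statement; the Hahn--Banach/Riesz setup giving a nonzero finite signed Borel measure $\mu$ on $K$ with $\int_K\sigma(\lambda t+\theta)\,d\mu(t)=0$; and the mollification step, which by Fubini yields a smooth, bounded, non-constant $h=\sigma*\widetilde\varphi$ with all derivatives bounded and the same annihilation property. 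All good up to there.

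The gap is the last step. Differentiating in $\theta$ only produces $h^{(k)}(\lambda t+\theta)$, which are again bounded, non-decaying functions of the same ridge form, and further mollification likewise keeps you inside $L^\infty$; nothing in these operations manufactures decay, so the claim that ``one obtains a Schwartz function in the kernel'' is unjustified, and indeed the closed span of $\{h(\lambda\cdot+\theta)\}$ in the topology you are implicitly using need not contain any nonzero integrable function. Consequently the Fourier--Stieltjes conclusion $\hat\mu\equiv 0$ does not follow as written. Two standard repairs: (i) differentiate in $\lambda$ rather than $\theta$ and evaluate at $\lambda=0$. Since $\partial_\lambda^k\,h(\lambda t+\theta)=t^k h^{(k)}(\lambda t+\theta)$ is dominated on $K$ by $(\max_K|t|)^k\|h^{(k)}\|_\infty$, you may pass the derivative through the integral and obtain $h^{(k)}(\theta)\int_K t^k\,d\mu(t)=0$ for every $\theta$ and $k\ge 0$. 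Because $h$ is bounded and non-constant it cannot be a polynomial, so $h^{(k)}\not\equiv 0$ for every $k$; hence $\int_K t^k\,d\mu(t)=0$ for all $k$, and Weierstrass forces $\mu=0$. This is the cleanest finish and slots directly after your mollification step. (ii) Alternatively, argue distributionally: $\hat\mu$ is an entire function by Paley--Wiener (compact support of $\mu$), while $\hat h$ is a tempered distribution whose support is not contained in $\{0\}$ (else $h$ would be a bounded polynomial, hence constant). From $h(\lambda\cdot)*\check\mu\equiv 0$ one gets $\hat h(\cdot/\lambda)\,\hat\mu(-\cdot)=0$ for every $\lambda\ne 0$; dilating sweeps $\operatorname{supp}\hat h$ over every nonzero frequency, and the smoothness of $\hat\mu$ then yields $\hat\mu\equiv 0$. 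Either route closes your argument; without one of them, the proof as written has a real hole at the final Fourier step.
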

Later, the authors in \cite{leshno1993multilayer} showed that being a universal approximator is equivalent to having a non-polynomial activation function $\sigma$. Approximation theorems in the space of differentiable functions are established in \cite{li1996simultaneous} through the following theorem.
\begin{theorem}[Theorem 2.1 in \cite{li1996simultaneous}]\label{theorem3}
    Let X be a compact subset of $\mathbb{R}^n$ and $f\in \hat{C}^{m_1}(X)\cap\hat{C}^{m_2}(X)\cap\cdots\cap\hat{C}^{m_q}(X)$, where $m_i \in \mathbb{Z}^n_+$. Also, let $\sigma$ be any non-polynomial function in $C^n(\mathbb{R})$, where $n=\max\{|m_i|, 1\leq i\leq q\}$. Then for any $\epsilon > 0$, there is a neural network defined as in \eqref{nn_dense} such that 
    \begin{equation}
        \|D^kf - D^kG\|_{L^{\infty}(X)} < \epsilon,\quad k \leq m_i, \quad \text{ for some } i\quad, 1\leq i\leq q. \nonumber
    \end{equation}
\end{theorem}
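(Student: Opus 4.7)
The plan is to establish \cref{theorem3} by composing two density results under the $C^m$-norm: polynomials are dense in $C^m(X)$, and shallow networks of the form \eqref{nn_dense} are dense in the polynomials. Throughout I fix any single index $i_0$ (the statement requires the estimate for just \emph{some} $i$), set $m:=m_{i_0}$, and write $\|g\|_{C^m(X)} := \max_{k\le m}\|D^k g\|_{L^{\infty}(X)}$, so that it suffices to produce a $G$ of the form \eqref{nn_dense} with $\|f-G\|_{C^m(X)}<\epsilon$.

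\textbf{Step 1: Polynomial density in $C^m(X)$.} A Whitney-type extension enlarges $f\in\hat C^m(X)$ to $\tilde f\in C^{|m|}(\mathbb{R}^n)$ agreeing with $f$ on $X$. Mollifying $\tilde f$ by a smooth bump and then Taylor-truncating on a ball containing $X$ produces a polynomial $P$ with $\|f-P\|_{C^m(X)}<\epsilon/2$. This step is classical and I would only sketch it, invoking Jackson-type simultaneous approximation on a cube enclosing $X$.

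\textbf{Step 2: Network density on polynomials.} This is the technical heart. Since $\sigma\in C^n$ is non-polynomial, for each $0\le k\le n$ there is a bias $\theta_k\in\mathbb{R}$ with $\sigma^{(k)}(\theta_k)\neq 0$ (otherwise $\sigma^{(k)}\equiv 0$ would force $\sigma$ to be a polynomial of degree $<k$). For a direction $y\in\mathbb{R}^n$ and small $h>0$, I would form the $k$-th forward difference
\begin{equation*}
T_{h,k,y}(x)\;:=\;\frac{1}{h^{k}\sigma^{(k)}(\theta_k)}\sum_{j=0}^{k}(-1)^{k-j}\binom{k}{j}\sigma\bigl(jh\,y^{T}x+\theta_k\bigr),
\end{equation*}
which is itself of the form \eqref{nn_dense}. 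Differentiating termwise, expanding $\sigma^{(|\beta|)}(jh\,y^{T}x+\theta_k)$ around $\theta_k$ in a Taylor series with Lagrange remainder, and exploiting the classical identities $\sum_{j=0}^{k}(-1)^{k-j}\binom{k}{j}j^{p}=0$ for $p<k$ and $=k!$ for $p=k$ yields
\begin{equation*}
D^{\beta}T_{h,k,y}(x)\;\longrightarrow\;D^{\beta}(y^{T}x)^{k}\quad\text{uniformly on }X,\qquad |\beta|\le|m|,
\end{equation*}
as $h\to 0^{+}$. A polarisation/Vandermonde argument on finitely many choices of $y$ shows that $\{(y^{T}x)^{k}:y\in\mathbb{R}^n\}$ spans the full space of degree-$k$ homogeneous polynomials; summing over $k\le|m|$ thus realises every polynomial of degree $\le|m|$, in particular $P$, as a $C^{m}(X)$-limit of networks of the form \eqref{nn_dense}. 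Approximating $P$ within $\epsilon/2$ in $C^{m}(X)$ by such a network $G$ and invoking the triangle inequality concludes the proof.

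\textbf{Main obstacle.} The delicate point is controlling the orders $|\beta|<k$, where the prefactor $1/h^{k-|\beta|}$ threatens to blow up. This is precisely handled by the vanishing sums $\sum_{j}(-1)^{k-j}\binom{k}{j}j^{p}=0$ for $p<k$, which cancel the singular $h$-powers and leave the correct derivative of $(y^{T}x)^{k}$ in the limit, with a remainder of order $O(h)$ guaranteed by uniform continuity of $\sigma^{(n)}$ on the compact set $\{jh\,y^{T}x+\theta_k : x\in X,\ 0\le j\le k,\ 0<h\le h_0\}$. The hypothesis $n=\max\{|m_i|\}$ is exactly what makes all of these Taylor expansions fit inside the available regularity of $\sigma$; with any less smoothness the $|\beta|=|m|$ derivative estimates would fail.
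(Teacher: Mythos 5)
The paper does not prove \cref{theorem3}; it is cited verbatim from \cite{li1996simultaneous} and used as a black box, so there is no ``paper's own proof'' to compare against. Your sketch is nonetheless a faithful reconstruction of the standard argument behind such simultaneous-approximation results (and is in the same spirit as Li's and the earlier Hornik--Stinchcombe--White derivative-approximation proofs): reduce to polynomials via a Whitney extension and Jackson-type $C^m$-density, then realise each monomial $(y^Tx)^k$ as a $C^{|m|}$-limit of $k$-th forward differences of $\sigma(\cdot\, y^Tx+\theta_k)$, with the vanishing binomial sums $\sum_j(-1)^{k-j}\binom{k}{j}j^p=0$ ($p<k$) killing the negative powers of $h$ and a Lagrange-remainder/uniform-continuity argument handling the top-order term. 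Two small points worth flagging. First, you correctly need to check that for $k<|\beta|\le|m|$ the differentiated network tends to $0=D^\beta(y^Tx)^k$, which follows from the surviving factor $h^{|\beta|-k}$ and boundedness of $\sigma^{(|\beta|)}$ on a compact neighbourhood of the biases --- state this explicitly, since otherwise the $C^{|m|}$-convergence claim is incomplete. Second, you read the conclusion as ``for some $i$'' and fix a single $i_0$; this matches the paper's literal phrasing, but the original theorem in \cite{li1996simultaneous} is the stronger ``for all $k$ with $k\le m_i$ for some $i$'' statement. Your argument in fact delivers the stronger version without change --- the monomial construction approximates all $D^\beta$ with $|\beta|\le n=\max_i|m_i|$ simultaneously --- so you may as well claim it.
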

The density argument on simultaneous approximations of multivariate functions and their partial derivatives have been applied to show the existence of a neural network that arbitrarily minimizes the PDE residual loss function, see \cite{NHM,sirignano2018dgm}. 

\section{The Deep Minimizing Movement Method}
We consider the following sequential minimization problem with time step $\tau$ :
\begin{equation}\label{JJKO}
    u_{k+1} = \argmin_{u \in X} \{\mathcal{F}(u) + \frac{d_X^2(u, u_{k})}{2\tau} \}, \quad u_{0} = u(0,x),
\end{equation}
where $X$ denotes either $L^2(\Omega)$, or $\mathcal{P}_2(\Omega)$ with $L^2$-Wasserstein distance. 

The main idea of this paper is to optimize \eqref{JJKO} by parametrizing $u$ by the neural network $u_\theta$. We present some theoretical results to support our method in the following sequel.

First of all, we show that there exists a neural network $u_\theta$ which minimizes the loss $\mathcal{F}(u_\theta) + \frac{d_{L^2(\Omega)}^2(u_\theta,u_k)}{2\tau}$ in \eqref{nn_dense} as much as we want in most cases.

\begin{theorem}\label{univl2thm}
Let $\Omega$ be a compact subset of $\mathbb{R}^n$ and $\phi:\Omega\times\mathbb{R}\times \mathbb{R}^n \rightarrow \mathbb{R}$ be a continuous function and $\tau>0$. Let $g,h$ be $C^k$-functions on $\Omega$. Define $\mathcal{I}(u):=\int_\Omega \phi(x,u(x),\nabla u(x)) dx  + \frac{d_{L^2(\Omega)}^2(u,h)}{2\tau}$ for every $C^k$-function $u$ on $\Omega$. Then, for $\epsilon>0$, there is a neural network $u_\theta$ defined as in \eqref{nn_dense} such that $||u_\theta-g||_{L^\infty(\Omega)} \leq \epsilon$ and $\mathcal{I}(u_\theta) - \mathcal{I}(g)\leq \epsilon$.
\end{theorem}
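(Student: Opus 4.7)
The plan is to realize $u_\theta$ by a \emph{simultaneous} approximation of $g$ together with $\nabla g$, then transfer the closeness to the functional $\mathcal{I}$ by exploiting uniform continuity of $\phi$ on a fixed compact set and a direct $L^2$-identity. The crux is that $\phi$ depends on both $u$ and $\nabla u$, so an $L^\infty$ bound on $u_\theta-g$ alone cannot control $\int \phi(x,u_\theta,\nabla u_\theta)\,dx$.

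First I would apply Theorem~\ref{theorem3} with $q=n+1$ and multi-indices $m_0=\mathbf{0}$, $m_i=e_i$ for $i=1,\ldots,n$, so that $\max_i|m_i|=1$ and any non-polynomial $\sigma\in C^1(\mathbb{R})$ is admissible. For any prescribed $\delta>0$ this yields a network $u_\theta$ of the form \eqref{nn_dense} with
\begin{equation*}
\|u_\theta-g\|_{L^\infty(\Omega)}\le \delta,\qquad \|\partial_i u_\theta-\partial_i g\|_{L^\infty(\Omega)}\le \delta \quad (i=1,\ldots,n).
\end{equation*}
Because $\Omega$ is compact and $g\in C^k$, the set $\{(x,g(x),\nabla g(x)):x\in\Omega\}$ is a compact subset of $\Omega\times\mathbb{R}\times\mathbb{R}^n$; and for $\delta\le 1$ the analogous set built from $u_\theta$ lies in a fixed compact neighborhood $K$ of it. Uniform continuity of $\phi$ on $K$, with modulus $\omega_\phi$, then gives
\begin{equation*}
\left|\int_\Omega \phi(x,u_\theta,\nabla u_\theta)\,dx - \int_\Omega \phi(x,g,\nabla g)\,dx\right| \le |\Omega|\,\omega_\phi(\sqrt{n+1}\,\delta),
\end{equation*}
which vanishes as $\delta\to 0$.

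For the metric term I would use the algebraic identity
\begin{equation*}
d_{L^2(\Omega)}^2(u_\theta,h)-d_{L^2(\Omega)}^2(g,h) = \int_\Omega (u_\theta-g)(u_\theta+g-2h)\,dx,
\end{equation*}
whose right-hand side is bounded in absolute value by $\delta(\delta+2\|g-h\|_{L^\infty(\Omega)})|\Omega|$ and again vanishes with $\delta$. Combining the two estimates, $\mathcal{I}(u_\theta)-\mathcal{I}(g)$ is dominated by a quantity depending only on $\delta$, $|\Omega|$, $\omega_\phi$, and $\|g-h\|_{L^\infty(\Omega)}$, all of which tend to zero with $\delta$; choosing $\delta$ small enough that this bound and $\delta$ itself are both at most $\epsilon$ finishes the proof.

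The step I expect to be the main obstacle is the very first one, namely invoking the simultaneous approximation theorem with multi-indices chosen so that $u_\theta$ and each partial $\partial_i u_\theta$ are simultaneously close to their targets in $L^\infty$ — without this, no amount of $L^\infty$ closeness of $u_\theta$ to $g$ would let us control the $\phi$-integral. Everything downstream is routine continuity-and-compactness bookkeeping, with the only subtlety being to confine the image of $(u_\theta,\nabla u_\theta)$ to a fixed compact set independent of $\delta$ so that a single modulus of continuity $\omega_\phi$ does the work uniformly.
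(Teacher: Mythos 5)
Your proposal is correct and follows essentially the same route as the paper's proof: invoke the simultaneous $C^1$ approximation theorem (Theorem~\ref{theorem3}) to control both $u_\theta-g$ and $\nabla u_\theta-\nabla g$ in $L^\infty$, confine $(u_\theta,\nabla u_\theta)$ to a fixed compact set so a single modulus of continuity of $\phi$ controls the integral term, and then bound the change in the squared $L^2$ distance by a quantity vanishing with the $L^\infty$ error. The only cosmetic difference is in the metric term, where you expand $\|u_\theta-h\|^2-\|g-h\|^2$ as $\int(u_\theta-g)(u_\theta+g-2h)$ while the paper applies the $L^2$ triangle inequality; the two bounds are equivalent in substance.
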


\begin{proof}
For notational convenience, let us define $\mathcal{J}(u):=\int_\Omega \phi(x,u(x),\nabla u(x)) dx$ for every $C^k$-function $u$ on $\Omega$, so that $\mathcal{I}(u) = \mathcal{J}(u) + \frac{d_{L^2(\Omega)}^2(u,h)}{2\tau}$ for all $u$.

Let $\epsilon>0$ be given and pick $\epsilon_1>0$ such that $(1+\frac{\epsilon_1}{2\tau})\cdot vol(\Omega)\cdot \epsilon_1 + \frac{1}{\tau}\cdot \sqrt{vol(\Omega)}\cdot \epsilon_1\cdot d_{L^2(\Omega)}(g,h)<\epsilon$.

Since $g$ is $C^k$ on $\Omega$, both $g$ and $\nabla g$ are bounded on $\Omega$. Thus, there exists $R>0$ such that the range of $g$ is contained in the interval $(-R,R)$ and the range of $\nabla g$ is contained in the open ball $B(0,R)$. Since $\phi$ is continuous, it is uniformly continuous on $\Omega\times[-R-\epsilon_1,R+\epsilon_1]\times \overline{B(0,R+\epsilon_1)}$. Thus, there exists $\delta$ such that $|x-y| + ||z-w||_2<\delta$ implies $|\phi(a,x,z) - \phi(a,y,w)|<\epsilon_1$ for any $(a,x,z),(a,y,w)\in \Omega\times[-R-\epsilon_1,R+\epsilon_1]\times \overline{B(0,R+\epsilon_1)}$. Then, by Theorem \ref{theorem3}, there exists a neural network $u_\theta$ satisfying $||u_\theta - g||_{L^\infty(\Omega)} + ||\nabla u_\theta - \nabla g||_{L^\infty(\Omega)}< \min\{\delta, \epsilon_1,\epsilon \}$, so that $||u_\theta - g||_{L^\infty(\Omega)}\leq \epsilon$.

Next, we pick $x\in \Omega$. Since $||\nabla u_\theta(x)||_2 - ||\nabla g(x)||_2 \leq ||\nabla u_\theta(x) - \nabla g(x)||_2 \leq ||\nabla u_\theta - \nabla g||_{L^\infty(\Omega)} <\epsilon_1$, we know that $\nabla u_\theta(x) \in \overline{B(0,R+\epsilon_1)}$. Analogously, $u_\theta(x)\in [-R-\epsilon_1, R+\epsilon_1]$. Therefore, both $(u_\theta(x),\nabla u_\theta(x))$ and $(g(x),\nabla g(x))$ are elements of $[-R-\epsilon_1, R+\epsilon_1]\times \overline{B(0,R+\epsilon_1)}$. Since $|u_\theta(x)-g(x)|+||\nabla u_\theta(x) - \nabla g(x)||_2<\delta$, we conclude that $|\phi(x,u_\theta(x),\nabla u_\theta(x)) - \phi(x,g(x),\nabla g(x))|<\epsilon_1$. Since this holds for every $x$, we have

\begin{align*}
    |\mathcal{J}(u_\theta) - \mathcal{J}(g)|&= \bigg| \int_\Omega \phi(x,u_\theta(x), \nabla u_\theta(x)) - \phi(x,g(x),\nabla g(x)) dx\bigg|\\
    &\leq \int_\Omega |\phi(x,u_\theta(x), \nabla u_\theta(x)) - \phi(x,g(x),\nabla g(x))| dx \\
    &\leq \epsilon_1\cdot vol(\Omega)
\end{align*}

Now, note that $d_{L^2(\Omega)}^2(u_\theta,g)=\int_\Omega |u_\theta-g|^2 dx\leq vol(\Omega)\cdot ||u_\theta-g||_{L^\infty(\Omega)}^2\leq vol(\Omega)\cdot \epsilon_1^2$. Thus, we have

\begin{align*}
    d_{L^2(\Omega)}^2(u_\theta,h) - d_{L^2(\Omega)}^2(g,h) &\leq \bigg(d_{L^2(\Omega)}(u_\theta, g) + d_{L^2(\Omega)}(g,h)\bigg)^2 - d_{L^2(\Omega)}^2(g,h) \\
    &= d_{L^2(\Omega)}^2(u_\theta,g) + 2\cdot d_{L^2(\Omega)}(u_\theta,g) \cdot d_{L^2(\Omega)}(g,h) \\
    &\leq vol(\Omega)\cdot \epsilon_1^2 + 2\cdot \sqrt{vol(\Omega)}\cdot \epsilon_1\cdot d_{L^2(\Omega)}(g,h)
\end{align*}

Hence,

\begin{align*}
    \mathcal{I}(u_\theta) -\mathcal{I}(g)&\leq (1+\frac{\epsilon_1}{2\tau})\cdot vol(\Omega)\cdot \epsilon_1 + \frac{1}{\tau}\cdot \sqrt{vol(\Omega)}\cdot \epsilon_1\cdot d_{L^2(\Omega)}(g,h) \\
    &\leq \epsilon.
\end{align*}

\end{proof}

Theorem \ref{univl2thm} shows that if we take $g$ as the global minimizer $u_{k+1}$ of \eqref{nn_dense} with $h=u_k$, there always exist a neural network $u_\theta$ such that $\mathcal{I}(u_\theta)\leq \mathcal{I}(u_{k+1}) + \epsilon$ with $||u_\theta - u_{k+1}||_{L^\infty(\Omega)}\leq \epsilon$. Therefore, we can expect that we can find a neural network solution approximating $u_{k+1}$ by optimizing the parametric form of \eqref{nn_dense}.

Next, we will justify our method for $\mathbb{W}_2$-gradient flows. Yet, we need a variation of the standard universal approximation theorem for probability density functions before we prove the analogous result of Theorem \ref{univl2thm} for $\mathbb{W}_2$ spaces. This is because we will approximate a given probability density function by neural networks which behave as if probability density functions. We start by proving that any \textit{non-negative} $C^k$-function can be approximated by a \textit{non-negative} neural network.

\begin{theorem}\label{universal_nonneg}
Let $X,f,\sigma$ and $m_i$'s be given as in Theorem \ref{theorem3} with an additional assumption that $f$ is non-negative. Then, for any $\epsilon>0$, there is a \textit{non-negative} neural network defined as in $\eqref{nn_dense}$ such that
\begin{equation}
        \|D^kf - D^kG\|_{L^{\infty}(X)} < \epsilon,\quad k \leq m_i, \quad \text{ for some } i, 1\leq i\leq q. \nonumber
    \end{equation}
\end{theorem}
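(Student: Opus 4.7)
The plan is to reduce directly to Theorem \ref{theorem3} via a small constant shift. Given $\epsilon > 0$, I first apply Theorem \ref{theorem3} to $f$ with tolerance $\epsilon/2$ to obtain a neural network
\[
G_1(x) = \sum_{j=1}^{N} a_j \sigma(y_j^T x + \theta_j)
\]
satisfying $\|D^k f - D^k G_1\|_{L^\infty(X)} < \epsilon/2$ for every $k \leq m_i$, for some $1 \leq i \leq q$. Since $f \geq 0$, the $k = 0$ bound yields $G_1(x) > f(x) - \epsilon/2 \geq -\epsilon/2$ on $X$, so $G_1$ fails to be non-negative by at most $\epsilon/2$.

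The next step is to upgrade $G_1$ to a non-negative network by adding the constant $\epsilon/2$, and to check that this constant can itself be realized as a single neuron of the form appearing in \eqref{nn_dense}. Since $\sigma \in C^n(\mathbb{R})$ is non-polynomial it is in particular not identically zero, so there exists $b \in \mathbb{R}$ with $\sigma(b) \neq 0$. Setting $y_{N+1} := 0$, $\theta_{N+1} := b$, and $a_{N+1} := (\epsilon/2)/\sigma(b)$ produces an extra neuron whose output is the constant $\epsilon/2$. Define $G := G_1 + \epsilon/2$; then $G$ is of the form \eqref{nn_dense} with $N+1$ neurons, and $G(x) \geq 0$ on $X$ by construction.

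It remains to verify the approximation bound for $G$. For $k = 0$, $|G(x) - f(x)| \leq |G_1(x) - f(x)| + \epsilon/2 < \epsilon$. For $|k| \geq 1$ the added constant leaves derivatives unchanged, so $\|D^k f - D^k G\|_{L^\infty(X)} = \|D^k f - D^k G_1\|_{L^\infty(X)} < \epsilon/2 < \epsilon$, and the estimate holds for all required multi-indices. I do not expect a serious obstacle: the entire argument is essentially a one-line shift, and the only subtlety is the constant-as-single-neuron trick, which is immediate once one recalls that a non-polynomial $C^n$ function cannot vanish identically on $\mathbb{R}$.
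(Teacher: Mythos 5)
Your proof is correct and uses essentially the same idea as the paper's: approximate via Theorem \ref{theorem3}, add a small constant to restore non-negativity, realize that constant as a single extra neuron (using that a non-polynomial $\sigma$ cannot vanish identically), and note that the shift leaves all derivatives of order $\geq 1$ unchanged. The only cosmetic difference is bookkeeping: the paper shifts the \emph{target} upward (setting $\tilde f := f + \epsilon$, approximating $\tilde f$, then subtracting $\epsilon/2$ from the resulting network), whereas you approximate $f$ directly and add $\epsilon/2$ to the network; both yield a network biased upward by about $\epsilon/2$, and the final estimates are identical.
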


\begin{proof}
Fix $\epsilon>0$ and define $\tilde{f}:=f + \epsilon$. Then, by Theorem \ref{theorem3}, there exists a neural network $\tilde{G}$ of the form $\sum_{j=1}^{N} a_j \sigma(y_j^T x + \theta_j)$ such that 
\begin{equation}
        \|D^k\tilde{f} - D^k\tilde{G}\|_{L^{\infty}(X)} < \epsilon/2,\quad k \leq m_i, \quad \text{ for some } i, 1\leq i\leq q. \nonumber
\end{equation}

Since $\tilde{f}(x) - \tilde{G}(x) \leq ||\tilde{f}-\tilde{G}||_{L^\infty(\Omega)} <\epsilon /2$ for any $x\in X$, we have $\tilde{G}(x)>\tilde{f} -\epsilon/2 =f(x)+\epsilon/2 \geq \epsilon/2$ for any $x\in X$.

Since $\sigma$ is non-polynomial, there exists $\theta_{N+1}\in\mathbb{R}$ such that $\sigma(\theta_{N+1})\neq 0$. Now, we define $y_{N+1}:=0$ and $a_{N+1}:=\frac{-\epsilon}{2\cdot \sigma(\theta_{N+1})}$ and $G(x):=\sum_{j=1}^{N+1} a_j \sigma(y_j^T x + \theta_j)$ for every $x\in X$, so that $G$ is a neural network defined as in \eqref{nn_dense}.

Now, we pick $x\in X$. Then,

\begin{align}\label{nonnegativity_middle}
    G(x)&=\tilde{G}(x) + a_{N+1}\sigma(y_{N+1}^Tx+\theta_{N+1}) \nonumber\\
    &=\tilde{G}(x) + a_{N+1}\sigma(\theta_{N+1}) \nonumber\\
    &=\tilde{G}(x) -\epsilon/2 \geq 0.
\end{align}

\noindent Therefore, we have

\begin{align*}
    f(x)-G(x)&=f(x)-\tilde{G}+\epsilon/2 \\
    &=f(x) + \epsilon - \tilde{G}(x) - \epsilon/2 \\
    &= \tilde{f}(x) - \tilde{G}(x) - \epsilon/2.
\end{align*}
    
\noindent Thus, $|f(x)-G(x)|\leq |\tilde{f}(x)-\tilde{G}(x)|+\epsilon/2 < \epsilon$ and $G(x)\geq 0$ for every $x\in X$ by \eqref{nonnegativity_middle}, meaning that $||f-G||_{L^\infty(\Omega)}<\epsilon$ and $G$ is a non-negative neural network as in \eqref{nn_dense}. Since $\tilde{f}=f+\epsilon$ and $G = \tilde{G} -\epsilon/2$,  $D^kf =D^k\tilde{f}$ and $D^kG=D^k\tilde{G}$ for every $k\neq 0$. Consequently, $G$ is the desired non-negative neural network.

\end{proof}

Now, we prove that there exists a probability density function $\widetilde{u_\theta}$ which can be produced by manipulating a neural network, which approximates a given density $f$ as much as we want.

\begin{theorem}\label{probdensityuniversal}
Let $X,f,\sigma$ and $m_i$'s be given as in Theorem \ref{theorem3} with an additional assumption that $f$ is a probability density function whose domain is $X$. Then, for any $\epsilon>0$, there is a non-negative neural network $G$ defined as in \eqref{nn_dense} such that 
\begin{equation}
        \|f - \tilde{G}\|_{L^{\infty}(X)} < \epsilon, \nonumber
\end{equation}
where $\tilde{G} = \frac{1}{\int_X G} G$.
\end{theorem}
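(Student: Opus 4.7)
The plan is to reduce the statement to Theorem \ref{universal_nonneg} by first producing a non-negative neural network $G$ that is uniformly close to $f$, then showing that the normalization step does not destroy this closeness because $f$ already integrates to $1$, forcing $\int_X G$ to be close to $1$.

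First I would fix $\epsilon>0$ and choose an auxiliary tolerance $\epsilon_1>0$ (to be specified). Applying Theorem \ref{universal_nonneg} to $f$ with tolerance $\epsilon_1$ yields a non-negative neural network $G$ of the form \eqref{nn_dense} with $\|f-G\|_{L^\infty(X)}<\epsilon_1$. Let $c:=\int_X G$. Since $\int_X f=1$, integrating the pointwise bound gives
\begin{equation*}
|c-1|=\left|\int_X (G-f)\,dx\right|\leq \epsilon_1\cdot\mathrm{vol}(X).
\end{equation*}
In particular, for $\epsilon_1$ small enough we can guarantee $c\geq 1/2$, so $\tilde G=G/c$ is well defined and non-negative.

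Next I would control $\|\tilde G-f\|_{L^\infty(X)}$ by the algebraic identity
\begin{equation*}
\tilde G(x)-f(x)=\frac{G(x)-f(x)}{c}+\frac{(1-c)\,f(x)}{c},
\end{equation*}
which gives the pointwise estimate
\begin{equation*}
|\tilde G(x)-f(x)|\leq \frac{1}{c}\Bigl(\|f-G\|_{L^\infty(X)}+|1-c|\cdot \|f\|_{L^\infty(X)}\Bigr)\leq \frac{\epsilon_1}{c}\bigl(1+\mathrm{vol}(X)\cdot\|f\|_{L^\infty(X)}\bigr).
\end{equation*}
Note that $\|f\|_{L^\infty(X)}$ is finite because $f$ is continuous (being $C^k$) on the compact set $X$. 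Choosing $\epsilon_1$ small enough that both $c\geq 1/2$ and the final bound is below $\epsilon$ completes the argument.

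There is no genuine obstacle here; the only subtlety is the order of quantifiers: one must select $\epsilon_1$ in terms of $\epsilon$, $\mathrm{vol}(X)$, and $\|f\|_{L^\infty(X)}$ \emph{before} invoking Theorem \ref{universal_nonneg}, so that the bound on $c$ away from zero and the target tolerance $\epsilon$ are simultaneously satisfied. Since the theorem only asserts $L^\infty$-approximation of $f$ itself (not of its derivatives), we do not need the full strength of the simultaneous approximation of derivatives, and no further regularity considerations arise.
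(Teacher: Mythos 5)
Your proof is correct and follows essentially the same strategy as the paper's: invoke Theorem \ref{universal_nonneg} to get a non-negative $G$ uniformly close to $f$, integrate the pointwise bound to conclude $\int_X G$ is close to $1$, and then control the normalization error. The only cosmetic difference is that you use the algebraic split $\tilde G - f = \frac{G-f}{c} + \frac{(1-c)f}{c}$, whereas the paper goes through the triangle inequality $\|f-\tilde G\|\leq\|f-G\|+\|G-\tilde G\|$ and bounds $\|G-\tilde G\|=\|G\|\cdot|1-1/c|$; the two are interchangeable.
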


\begin{proof}
    Fix $\epsilon >0$. Then, take $\epsilon_0>0$ and $0<\epsilon_1<1$ such that $\epsilon_0\cdot ||f||_{L^\infty(\Omega)} + \frac{\epsilon_0^2}{vol(X)} < \epsilon/2$ and $\max\{ \frac{\epsilon_1}{1-\epsilon_1}, \frac{\epsilon_1}{1+\epsilon_1}, \epsilon_1\} < \epsilon_0$.
    
    Then, by Theorem \ref{universal_nonneg}, there exists a non-negative neural network $G$ defined as in \eqref{nn_dense} such that $||f-G||_{L^\infty(\Omega)} < \min\{\frac{\epsilon_1}{vol(X)}, \epsilon/2 \}$. Thus, $||G||_{L^\infty(\Omega)} \leq ||f||_{L^\infty(\Omega)} + \frac{\epsilon_1}{vol(X)} < ||f||_{L^\infty(\Omega)} + \frac{\epsilon_0}{vol(X)}$. 
    Since
    \begin{align*}
        \bigg|1-\int_X G\bigg| &= \bigg|\int_X f - \int_X G\bigg| = \bigg|\int_X (f-G)\bigg| \leq \int_X |f-G|\\
        & \leq vol(X)\cdot ||f-G||_{L^\infty(\Omega)}< vol(X)\cdot \frac{\epsilon_1}{vol(X)}=\epsilon_1,
    \end{align*}
    
    \noindent we have $1-\epsilon_1 < \int_X G < 1+\epsilon_1$.
    Therefore, $-\epsilon_0<-\frac{\epsilon_1}{1-\epsilon_1} < 1-\frac{1}{\int_X G} < \frac{\epsilon_1}{1+\epsilon_1} < \epsilon_0$, leading to $\bigg| 1-\frac{1}{\int_X G}\bigg| \leq \epsilon_0$.
    
    Hence,
    \begin{align*}
        \bigg|\bigg| G- \frac{G}{\int_X G} \bigg|\bigg|_{L^\infty(\Omega)} &\leq ||G||_{L^\infty(\Omega)}\cdot \bigg| 1 - \frac{1}{\int_X G} \bigg| \\
        &\leq ||G||_{L^\infty(\Omega)} \cdot \epsilon_0 \\
        &\leq (||f||_{L^\infty(\Omega)} + \frac{\epsilon_0}{vol(X)})\cdot \epsilon_0 \\
        &= \epsilon_0\cdot ||f||_{L^\infty(\Omega)} + \frac{\epsilon_0^2}{vol(X)} \\
        &< \epsilon/2.
    \end{align*}
    
    Therefore, we have
    \begin{align*}
        \bigg|\bigg|f-\frac{G}{\int_X G}\bigg|\bigg|_{L^\infty(\Omega)} \leq ||f-G||_{L^\infty(\Omega)} + \bigg|\bigg| G- \frac{G}{\int_X G} \bigg|\bigg|_{L^\infty(\Omega)} <\epsilon/2 + \epsilon/2 = \epsilon.
    \end{align*}
\end{proof}

Throughout, we identify absolutely continuous measures with their densities, $d\rho(x)=\rho(x)dx$. We will now show that there exists a non-negative neural network $u_\theta$ which minimizes the loss $\mathcal{F}(\widetilde{u_\theta})+\frac{\mathbb{W}_2^2(\widetilde{u_{\theta}},u_k)}{2\tau}$ in \eqref{JJKO} as much as we want in most cases, where $\widetilde{u_\theta}:=\frac{1}{\int_\Omega u_\theta}u_\theta$.

\begin{lemma}\label{w2distproof}
Assume that $\Omega$ is a compact subset of $\mathbb{R}^n$, and let $u, \rho$ be absolutely continuous probability distributions on $\Omega$ with finite second moments such that $||u-\rho||_{L^\infty(\Omega)} \leq \epsilon$. Then, $\mathbb{W}_2(u,\rho) \leq C\cdot \epsilon^{1/2}$ for some constant $C$ which only depends on $\Omega$. 
\end{lemma}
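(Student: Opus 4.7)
The plan is to construct an explicit transport plan that only moves the ``mismatched'' mass between $u$ and $\rho$, while leaving their common part in place. Writing $m(x):=\min(u(x),\rho(x))$, the functions $u-m$ and $\rho-m$ are non-negative, have disjoint essential supports in some sense, and integrate to the same value $\alpha:=\tfrac{1}{2}\|u-\rho\|_{L^1(\Omega)}$. Leaving the common mass on the diagonal costs nothing, so only the remaining $\alpha$ units of mass need to be relocated, and each unit is transported over a distance no larger than the diameter of $\Omega$.

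Concretely, I would define a candidate $\gamma \in \Pi(u,\rho)$ by
\begin{equation*}
    d\gamma(x,y) = m(x)\,\delta_{y=x}\,dx + \tfrac{1}{\alpha}(u(x)-m(x))(\rho(y)-m(y))\,dx\,dy
\end{equation*}
(with the second term understood as zero if $\alpha=0$). A quick check shows that the marginals of $\gamma$ are $u$ and $\rho$. Computing the transport cost,
\begin{equation*}
    \int_{\Omega\times\Omega} \|x-y\|^2\,d\gamma(x,y) \;\leq\; \mathrm{diam}(\Omega)^2 \cdot \tfrac{1}{\alpha}\cdot \alpha^2 \;=\; \mathrm{diam}(\Omega)^2 \cdot \alpha,
\end{equation*}
since the diagonal part contributes nothing and the off-diagonal mass has total weight $\alpha$.

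To finish, I would bound $\alpha$ using the $L^\infty$ hypothesis: since $\Omega$ is compact and hence has finite volume,
\begin{equation*}
    2\alpha = \|u-\rho\|_{L^1(\Omega)} \;\leq\; \mathrm{vol}(\Omega)\cdot \|u-\rho\|_{L^\infty(\Omega)} \;\leq\; \mathrm{vol}(\Omega)\cdot \epsilon.
\end{equation*}
Combining with the cost estimate and taking square roots gives $\mathbb{W}_2(u,\rho) \leq C\,\epsilon^{1/2}$ with $C=\mathrm{diam}(\Omega)\,\sqrt{\mathrm{vol}(\Omega)/2}$, which depends only on $\Omega$.

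I do not anticipate a serious obstacle: the only mildly delicate point is the degenerate case $\alpha=0$ (where $u=\rho$ a.e., so both sides vanish) and verifying that the proposed $\gamma$ actually has the correct marginals, which follows from the pointwise identities $m+(u-m)=u$ and $m+(\rho-m)=\rho$ together with $\int(\rho-m)=\int(u-m)=\alpha$.
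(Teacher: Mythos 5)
Your proof is correct, but it takes a genuinely different route from the paper. The paper argues on the dual side: it first bounds $\mathbb{W}_2(u,\rho)\leq \mathrm{diam}(\Omega)^{1/2}\,\mathbb{W}_1(u,\rho)^{1/2}$, then invokes Kantorovich--Rubinstein duality to write $\mathbb{W}_1$ as a supremum over $1$-Lipschitz test functions, shows that (after subtracting a constant) each such test function can be rescaled by $\mathrm{diam}(\Omega)$ to lie in the unit ball of $L^\infty$, and finally bounds the resulting pairing by $\mathrm{vol}(\Omega)\cdot\|u-\rho\|_{L^\infty}$. You instead work entirely on the primal side: you exhibit an explicit ``lazy'' coupling that keeps the common mass $m=\min(u,\rho)$ on the diagonal and sends only the surplus $\alpha=\tfrac{1}{2}\|u-\rho\|_{L^1}$ via the product coupling of $(u-m)/\alpha$ and $(\rho-m)/\alpha$, so the cost is at most $\mathrm{diam}(\Omega)^2\alpha$. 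Your construction is more elementary (no duality theorem is needed, only the definition of $\mathbb{W}_2$ as an infimum over couplings), it makes the source of the $\epsilon^{1/2}$ rate transparent, and it even yields a marginally sharper constant $C=\mathrm{diam}(\Omega)\sqrt{\mathrm{vol}(\Omega)/2}$ versus the paper's $\mathrm{diam}(\Omega)\sqrt{\mathrm{vol}(\Omega)}$. The paper's dual approach is perhaps more readily adapted to other cost functionals, but for this lemma your argument is cleaner. One cosmetic note: the phrase ``disjoint essential supports in some sense'' is not actually used anywhere in the argument; all you need is that $u-m$ and $\rho-m$ are nonnegative and integrate to $\alpha$, and that $|u-\rho|=(u-m)+(\rho-m)$ pointwise (which is where the factor $\tfrac{1}{2}$ comes from), so you could drop that remark or make it precise.
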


\begin{proof}
Let $\gamma$ be a joint distribution of $u$ and $\rho$. Then, we have

\begin{align*}
\mathbb{W}_2(u,\rho) &= \bigg( \int_\Omega ||x-y||_2^2 d\gamma(x,y) \bigg)^{1/2} \\
&= \bigg( \int_\Omega ||x-y||_2\cdot ||x-y||_2 d\gamma(x,y) \bigg)^{1/2} \\
&\leq diam(\Omega)^{\frac{1}{2}} \cdot \bigg( \int_\Omega ||x-y||_2 d\gamma(x,y) \bigg)^{1/2}.
\end{align*}

\noindent Since $\gamma$ was arbitrary, we obtain

\begin{equation}\label{KRBBB}
    \mathbb{W}_2(u,\rho)\leq diam(\Omega)^{\frac{1}{2}}\cdot \bigg(\inf_{\gamma\in \Pi(u,\rho)} \int_\Omega ||x-y||_2 d\gamma(x,y) \bigg)^{\frac{1}{2}},
\end{equation}
where $\Pi(u,\rho)$ is the collection of joint distributions of $u$ and $\rho$.

The term $\inf_{\gamma\in \Pi(u,\rho)} \int_\Omega ||x-y||_2 d\gamma(x,y)$ is called the Wasserstein-1 metric and is denoted by $\mathbb{W}_1(u,\rho)$. Now, by the Kantorovich-Rubinstein duality theorem, we have

\begin{equation}\label{KRB}
    \mathbb{W}_1(u,\rho) = \sup \biggl\{ \int_\Omega f(x) d(u-\rho)(x) : f\in C(\Omega) \text{ with } ||f||_{Lip} \, \leq 1 \biggr\},
\end{equation}
where $||\cdot||_{Lip}$ denotes the minimal Lipschitz constant.

Define the set $\mathcal{E}:=\biggl\{ \int_\Omega g(x) d(u-\rho)(x): g\in C(\Omega) \text{ with } ||g||_{L^\infty(\Omega)}\leq 1\biggr\}$. Let $f\in C(\Omega)$ with $||f||_{Lipz}\leq 1$, and pick a point $x_0\in \Omega$. Now, define $g:=\frac{1}{diam(\Omega)}\cdot (f- f(x_0))$. Since $|f(x)-f(x_0)|\leq ||x-x_0||_2\leq diam(\Omega)$ for every $x$, we have $||g||_{L^\infty(\Omega)} \leq 1$. Since 
\begin{align*}
    \int_\Omega g(x) d(u-\rho)(x) &= \frac{1}{diam(\Omega)}\int_\Omega (f(x) - f(x_0)) d(u-\rho)(x) \\
    &= \frac{1}{diam(\Omega)}\int_\Omega f d(u-\rho)(x),
\end{align*}
we have $\frac{1}{diam(\Omega)} \int_\Omega f(x)d(u-\rho)(x) = \int_\Omega g(x) d(u-\rho)(x)\in \mathcal{E}$. Since $f$ was arbitrary, by \eqref{KRB}, we have

\begin{equation}
\mathbb{W}_1(u,\rho) \leq diam(\Omega)\cdot \sup\mathcal{E}.
\end{equation}

\noindent Combining with \eqref{KRBBB}, we get
\begin{equation}
    \mathbb{W}_2(u,\rho)\leq diam(\Omega)\cdot (\sup \mathcal{E})^{\frac{1}{2}},
\end{equation}

Next, consider $g\in C(\Omega)$ with $||g||_{L^\infty(\Omega)}\leq 1$. Then,

\begin{align*}
    \int_\Omega g(x)d(u-\rho)(x) &= \int_\Omega g(x)(u(x)-\rho(x)) dx \\
    &\leq \int_\Omega |g(x)|\cdot |u(x)-\rho(x)| dx \\
    &\leq \int_\Omega |u(x)-\rho(x)| dx \\
    &\leq vol(\Omega)\cdot ||u-\rho||_{L^\infty(\Omega)}\\
    &\leq vol(\Omega)\cdot \epsilon.
\end{align*}

\noindent Since this holds for all such $g$, we have $\sup \mathcal{E}\leq vol(\Omega)\cdot\epsilon$.

Consequently, we have $\mathbb{W}_2^2(u,\rho)\leq C \cdot \epsilon$ where $C$ only depends on $\Omega$.
\end{proof}

\begin{theorem}\label{w2jkoproof}
Let $\Omega$ be a compact subset of $\mathbb{R}^n$ and $\phi:\Omega\times[0,1]\rightarrow \mathbb{R}$ be a continuous function and $\tau>0$. Let $\rho,\pi$ be absolutely continous $C^k$ probability distributions on $\Omega$ with finite second moments.
Define $\mathcal{I}(u):= \int_\Omega \phi(x,u(x)) dx + \frac{\mathbb{W}_2^2(u,\pi)}{2\tau}$ for every $C^k$ probability distribution $u$ on $\Omega$ with finite second moment. Then, for $\epsilon>0$, there is a non-negative neural network $u_\theta$ defined as in \eqref{nn_dense} such that $||\widetilde{u_\theta} - \rho||_{L^\infty(\Omega)}\leq\epsilon$ and $\mathcal{I}(\widetilde{u_\theta}) - \mathcal{I}(\rho)\leq\epsilon$, where $\widetilde{u_\theta} = \frac{1}{\int_\Omega u_\theta}u_\theta$.

\end{theorem}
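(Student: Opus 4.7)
The plan is to combine the density approximation result of Theorem \ref{probdensityuniversal} with the Wasserstein bound of Lemma \ref{w2distproof} in the same spirit as Theorem \ref{univl2thm}, just replacing the $L^2$ distance by $\mathbb{W}_2$. Since $\rho$ is $C^k$ on the compact set $\Omega$, it is bounded, so pick some $M > \|\rho\|_{L^\infty(\Omega)}$. Because $\phi$ is continuous on the compact rectangle $\Omega \times [0, M+1]$, it is uniformly continuous there, which will control the potential-energy part of $\mathcal{I}$.

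First, given $\epsilon > 0$, I would select an auxiliary $\epsilon_1 \in (0, 1)$ small enough that the total error collected below is at most $\epsilon$. Using uniform continuity of $\phi$, choose $\delta > 0$ so that $|s - t| < \delta$ implies $|\phi(x, s) - \phi(x, t)| < \epsilon_1$ for every $x \in \Omega$. Then apply Theorem \ref{probdensityuniversal} to produce a non-negative neural network $u_\theta$ with $\|\widetilde{u_\theta} - \rho\|_{L^\infty(\Omega)} < \min\{\delta, \epsilon_1, \epsilon\}$; in particular $\widetilde{u_\theta}(x) \in [0, M+1]$ for every $x$, so the uniform continuity estimate applies pointwise.

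Next, I would split $\mathcal{I}(\widetilde{u_\theta}) - \mathcal{I}(\rho)$ into the potential part and the Wasserstein part. For the potential part, pointwise uniform continuity integrated over $\Omega$ gives
\[
\left| \int_\Omega \phi(x,\widetilde{u_\theta}(x))\,dx - \int_\Omega \phi(x,\rho(x))\,dx \right| \leq \epsilon_1 \cdot vol(\Omega).
\]
For the Wasserstein part, Lemma \ref{w2distproof} applied to $\widetilde{u_\theta}$ and $\rho$ yields $\mathbb{W}_2(\widetilde{u_\theta}, \rho) \leq C\,\epsilon_1^{1/2}$ for a constant $C = C(\Omega)$; the triangle inequality on $\mathbb{W}_2$ then gives
\[
\mathbb{W}_2^2(\widetilde{u_\theta}, \pi) - \mathbb{W}_2^2(\rho, \pi) \leq \mathbb{W}_2^2(\widetilde{u_\theta}, \rho) + 2\,\mathbb{W}_2(\widetilde{u_\theta}, \rho)\,\mathbb{W}_2(\rho, \pi) \leq C^2\epsilon_1 + 2C\,\epsilon_1^{1/2}\,\mathbb{W}_2(\rho, \pi),
\]
exactly mirroring the $L^2$ computation in the proof of Theorem \ref{univl2thm}.

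Summing the two estimates and dividing the Wasserstein piece by $2\tau$ yields a bound on $\mathcal{I}(\widetilde{u_\theta}) - \mathcal{I}(\rho)$ of the form $A\,\epsilon_1 + B\,\epsilon_1^{1/2}$, where $A$ and $B$ depend only on $\Omega, \tau, \rho, \pi$. Since this expression tends to $0$ as $\epsilon_1 \to 0$, one fixes $\epsilon_1$ small enough that the total is at most $\epsilon$, which closes the argument. The only subtlety relative to the $L^2$ version is that Lemma \ref{w2distproof} gives only the square-root control $\mathbb{W}_2(\widetilde{u_\theta},\rho) \lesssim \epsilon_1^{1/2}$, so the cross term in the expansion of $\mathbb{W}_2^2(\widetilde{u_\theta}, \pi)$ decays like $\epsilon_1^{1/2}$ rather than $\epsilon_1$; this forces $\epsilon_1$ to be roughly quadratically small in $\epsilon$, but is no serious obstacle.
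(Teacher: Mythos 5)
Your proof follows essentially the same route as the paper's: split $\mathcal{I}(\widetilde{u_\theta}) - \mathcal{I}(\rho)$ into the potential part and the Wasserstein part, control the former by uniform continuity of $\phi$ on a compact product set, obtain the neural network via Theorem~\ref{probdensityuniversal}, control the latter by Lemma~\ref{w2distproof} plus the triangle inequality, and tune the auxiliary parameter so the two contributions sum to at most $\epsilon$. The only small divergence is bookkeeping: you work with $\Omega\times[0,M+1]$ where $M>\|\rho\|_{L^\infty(\Omega)}$ whereas the paper works directly with $\Omega\times[0,1]$ as stated in the theorem's hypothesis; your reading is actually the more robust one, since $\widetilde{u_\theta}$ need not take values in $[0,1]$ even when $\rho$ does, but this does not change the structure of the argument.
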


\begin{proof}
    Let $\epsilon>0$ be given, and $C$ be the constant given in Lemma \ref{w2distproof} which only depends on $\Omega$. Let $\epsilon_0>0$ be a constant such that $\epsilon_0 + \frac{1}{2\tau}(\epsilon_0 + 2\cdot \sqrt{\epsilon_0}\cdot \mathbb{W}_2(\rho,\pi)) < \epsilon$. Since $\Omega\times[0,1]$ is compact, $\phi$ is uniformly continuous. Therefore, there exists $\delta$ such that $|x-y|\leq\delta$ implies $|\phi(z,x)-\phi(z,y)|\leq \frac{1}{vol(\Omega)}\cdot \epsilon_0$, for every $z\in\Omega$ and $x,y\in [0,1]$. Now, by Theorem \ref{probdensityuniversal}, there is a non-negative neural network $u_\theta$ satisfying $||\widetilde{u_\theta} - \rho||_{L^\infty(\Omega)}<\min \{\delta, \frac{1}{C^2}\cdot \epsilon_0, \epsilon\}$. Therefore, we obtain 
    
\begin{align*}
    \bigg|\int_\Omega \phi(x,\widetilde{u_\theta}(x)) dx - \int_\Omega \phi(x,\rho(x)) dx\bigg|
    &\leq \int_\Omega |\phi(x,\widetilde{u_\theta}(x)) - \phi(x,\rho(x))| dx \\
    &\leq vol(\Omega)\cdot \frac{1}{vol(\Omega)}\cdot\epsilon_0 \\
    &=\epsilon_0
\end{align*}

Since $||\widetilde{u_\theta} - \rho||_{L^\infty(\Omega)}<\frac{1}{C^2}\cdot \epsilon_0$, by Lemma \ref{w2distproof}, we have  $\mathbb{W}_2(\widetilde{u_\theta},\rho)\leq \sqrt{\epsilon_0}$.
Therefore, 
\begin{align*}
    \mathbb{W}_2^2(\widetilde{u_\theta},\pi) - \mathbb{W}_2^2(\rho,\pi) &\leq \bigg(\mathbb{W}_2(\widetilde{u_\theta},\rho) + \mathbb{W}_2(\rho,\pi)\bigg)^2 - \mathbb{W}_2^2(\rho, \pi) \\
    &= \mathbb{W}_2^2(\widetilde{u_\theta},\rho) + 2\cdot \mathbb{W}_2(\widetilde{u_\theta},\rho)\cdot \mathbb{W}_2(\rho,\pi) \\
    &\leq \epsilon_0 + 2\cdot \sqrt{\epsilon_0} \cdot \mathbb{W}_2(\rho,\pi) \\
    &\leq 2\tau \cdot (\epsilon - \epsilon_0)
\end{align*}

Consequently, we have

\begin{align*}
    \mathcal{I}(\widetilde{u_\theta}) - \mathcal{I}(\rho) &= \int_\Omega \phi(\widetilde{u_\theta}(x)) - \phi(\rho(x)) dx + \frac{1}{2\tau}\cdot (\mathbb{W}_2^2(\widetilde{u_\theta},\pi) - \mathbb{W}_2^2(\rho,\pi)) \\
    &\leq \epsilon_0 + (\epsilon -\epsilon_0) \\
    &=\epsilon
\end{align*}

\end{proof}

Theorem \ref{w2jkoproof} shows that if we take $\rho$ as the global minimizer $u_{k+1}$ of \eqref{nn_dense} with $\pi=u_k$, there always exist a neural network $u_\theta$ such that $\mathcal{I}(\widetilde{u_\theta}) \leq \mathcal{I}(u_{k+1}) + \epsilon$ with $||\widetilde{u_\theta} - u_{k+1}||_{L^\infty(\Omega)}\leq \epsilon$, where $\widetilde{u_\theta}=\frac{1}{\int u_\theta} u_\theta$. Therefore, we can expect that we can find a neural network solution approximating $u_{k+1}$ by optimizing the parametric form of \eqref{nn_dense}.

Lastly, we present a result which shows that the increment of the error at each update is bounded above.

\begin{theorem}\label{theorem4}
Let $X$ be a metric space and $\mathcal{F}:X\rightarrow \mathbb{R}\cup\{+\infty\}$ be a lower semi-continuous function with some lower bounds to guarantee the existence of minimizers of \eqref{JJKO} for small $\tau$. Given $u_0\in X$ with $\mathcal{F}(u_0)<\infty$, let $\{u_k\}$ be the true minimizers of \eqref{JJKO} for each $k$, and let $\{u_{\theta,k}\}$ be a sequence of neural networks obtained by solving the optimization problem \eqref{JJKO} iteratively with a natural assumption that $\mathcal{F}(u_{\theta,k+1})+\frac{d_X^2(u_{\theta,k+1},u_{\theta,k})}{2\tau}\leq \mathcal{F}(u_{\theta,k}) + \frac{d_X^2(u_{\theta,k}, u_{\theta,k})}{2\tau}$ for each $k$. Then, we obtain
\begin{equation*}
    d_X(u_{\theta,k+1},u_{k+1}) \leq d_X(u_{\theta,k},u_k) + C\tau^{1/2}
\end{equation*}
for some constant $C$ depending on $\mathcal{F}$ and initial $u_0$.

\end{theorem}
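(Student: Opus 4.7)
The plan is to reduce the claim to a triangle inequality in which the two single-step displacements $d_X(u_{k+1}, u_k)$ and $d_X(u_{\theta, k+1}, u_{\theta, k})$ are each shown to be $O(\tau^{1/2})$, so that the per-step error between the neural network trajectory and the true JKO trajectory accumulates additively by at most $O(\tau^{1/2})$.

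First, I would apply the triangle inequality to split
\[d_X(u_{\theta, k+1}, u_{k+1}) \leq d_X(u_{\theta, k+1}, u_{\theta, k}) + d_X(u_{\theta, k}, u_k) + d_X(u_k, u_{k+1}),\]
which reduces the task to bounding the first and third terms on the right by a constant multiple of $\tau^{1/2}$ uniformly in $k$.

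For the third term, I would test the minimality of $u_{k+1}$ in \eqref{JJKO} against the competitor $u = u_k$, obtaining
\[\mathcal{F}(u_{k+1}) + \frac{d_X^2(u_{k+1}, u_k)}{2\tau} \leq \mathcal{F}(u_k),\]
hence $d_X^2(u_{k+1}, u_k) \leq 2\tau(\mathcal{F}(u_k) - \mathcal{F}(u_{k+1}))$. Using the hypothesized lower bound $m := \inf_X \mathcal{F} > -\infty$ together with the monotone decrease of $\mathcal{F}(u_k)$ starting from $\mathcal{F}(u_0) < \infty$, I obtain the uniform-in-$k$ estimate $d_X(u_k, u_{k+1}) \leq \sqrt{2(\mathcal{F}(u_0) - m)} \cdot \tau^{1/2}$. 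For the first term, an entirely parallel argument applies to the neural network sequence: the stated hypothesis (whose right-hand side is simply $\mathcal{F}(u_{\theta, k})$, since $d_X(u_{\theta,k}, u_{\theta,k}) = 0$) yields $d_X^2(u_{\theta, k+1}, u_{\theta, k}) \leq 2\tau(\mathcal{F}(u_{\theta, k}) - \mathcal{F}(u_{\theta, k+1}))$, so that combined with the same lower bound $m$ and the monotone descent of $\mathcal{F}(u_{\theta,k})$, one gets $d_X(u_{\theta, k+1}, u_{\theta, k}) \leq \sqrt{2(\mathcal{F}(u_{\theta, 0}) - m)} \cdot \tau^{1/2}$ (initializing, for instance, $u_{\theta, 0} = u_0$ so that $\mathcal{F}(u_{\theta, 0}) < \infty$).

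Substituting these two $O(\tau^{1/2})$ bounds back into the triangle inequality yields the conclusion with $C = \sqrt{2(\mathcal{F}(u_0) - m)} + \sqrt{2(\mathcal{F}(u_{\theta, 0}) - m)}$, depending only on $\mathcal{F}$ and the initial datum as claimed. I do not expect any serious obstacle here: the entire argument rests on the classical JKO-type energy dissipation estimate, and the one subtlety worth flagging is that the step-size bound must be uniform in $k$, which is exactly what the monotone descent plus the lower bound on $\mathcal{F}$ furnishes.
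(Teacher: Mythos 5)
Your proof is correct and follows essentially the same route as the paper: triangle inequality through $u_{\theta,k}$ and $u_k$, then the JKO energy-dissipation bound $d_X^2(u_{k+1},u_k)\leq 2\tau(\mathcal{F}(u_k)-\mathcal{F}(u_{k+1}))$ combined with monotone descent and the lower bound on $\mathcal{F}$ to control each single-step displacement by $\sqrt{2(\mathcal{F}(u_0)-\inf\mathcal{F})}\,\tau^{1/2}$. The paper takes $u_{\theta,0}=u_0$ implicitly (yielding $C=2\sqrt{2A}$ with $A=\mathcal{F}(u_0)-\inf\mathcal{F}$), which you flag explicitly as an initialization choice; otherwise the arguments coincide.
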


\begin{proof}
Firstly, since $u_{k+1}$ is a minimizer of \eqref{JJKO}, we get 
\begin{equation*}
\mathcal{F}(u_{k+1})+\frac{d_X^2(u_{k+1},u_{k})}{2\tau} \leq \mathcal{F}(u_{k})+\frac{d_X^2(u_{k},u_{k})}{2\tau} = \mathcal{F}(u_k)
\end{equation*}

Therefore, we get $\mathcal{F}(u_{k+1})\leq \mathcal{F}(u_k)$ for each $k$. Since this is valid for every $k$, we have $\mathcal{F}(u_k)\leq \mathcal{F}(u_0)<+\infty$ for each $k$. Analogous arguments apply to $u_{\theta,k}$, and we conclude that $\mathcal{F}(u_{\theta,k})<+\infty$ for each $k$.

Again, by the definition of $u_k$'s, we get
\begin{align*}
    d_X^2(u_{k}, u_{k+1}) \leq 2\tau (\mathcal{F}(u_{k}) - \mathcal{F}(u_{k+1})) \leq 2\tau (\mathcal{F}(u_{0}) - \inf\mathcal{F}) = 2A\tau,
\end{align*}
where $A=A(u_0, \mathcal{F}) \coloneqq (\mathcal{F}(u_{0}) - \inf\mathcal{F})$ only depends on the functional $\mathcal{F}$ and the initial $u_0$. Analogously, by the assumption, we get $d_X^2(u_{\theta,k}, u_{\theta,k+1}) < 2A\tau$. Then,
\begin{align*}
    d_X(u_{\theta, k+1}, u_{k+1}) &\leq d_X(u_{\theta, k+1}, u_{\theta, k}) + d_X(u_{\theta, k}, u_{k}) + d_X(u_{k}, u_{k+1}) \\
    &< \sqrt{2A\tau} + d_X(u_{\theta, k}, u_{k}) + \sqrt{2A\tau} = d_X(u_{\theta, k}, u_{k}) + C\tau^{1/2},
\end{align*}
where $C=2\sqrt{2A}$.
\end{proof}

In the remaining of this section, we give a detailed description of the framework of our method for the spaces $L^2(\Omega)$ and $\mathcal{P}_2(\Omega)$ with $L^2$-Wasserstein distance.

\subsection{Case when \texorpdfstring{$\boldsymbol{X=L^2(\Omega)}$}{TEXT}%
}\label{l2estimate}

If $X=L^2(\Omega)$ and the functional $\mathcal{F}(u)$ is given in the form $\int \phi(u,\nabla u)$, this integration can be evaluated by Monte Carlo estimate over mini-batches from the uniform distribution on $\Omega$ if $\Omega$ is bounded. If $\Omega=\mathbb{R}^n$, we can still approximate this integration by restricting the domain to a sufficiently large cropped bounded subset, i.e. $\int_{\mathbb{R}^n} \phi(u,\nabla u) \approx \int_{[-M,M]^n} \phi(u,\nabla u)$. Likewise, the distance $d_{L^2(\Omega)}^2(u,v)=\int_\Omega ||u-v||^2$ can also be evaluated by Monte Carlo estimate in the same manner. Now, by Theorem \ref{univl2thm}, it would suffice to minimize
\begin{equation}
    \argmin_\theta \{ \int \phi (u_\theta, \nabla u_\theta) + \frac{d_{L^2(\Omega)}^2(u_\theta, u_k)}{2\tau}\},
    \label{eq:L2_target}
\end{equation}
for a given $u_k$ to obtain the minimizer $u_{\theta^{*(k+1)}}$, which is an approximation of the minimizer of (\ref{JJKO}). Then, we take the optimized $u_{\theta^{*(k+1)}}$ in place of $u_k$ and repeat this process iteratively to derive the corresponding time-discretized gradient flow.

\subsection{Case when \texorpdfstring{$\boldsymbol{X=\mathcal{P}_2(\Omega)}$}{TEXT}%
}

If $X=\mathcal{P}_2(\Omega)$ and the functional $\mathcal{F}(u)$ is given as a linear combination of potential energy $\mathcal{V}$ and interaction energy $\mathcal{W}$ and internal energy $\mathcal{E}$, there is no problem approximating $\mathcal{F}(u)$ by Monte Carlo estimate as described in Section \ref{l2estimate}. Moreover, by Theorem \ref{probdensityuniversal}, any probability density function can be approximated by manipulating a neural network $u_\theta$ having non-negative outputs. Therefore, if we define $\widetilde{u_\theta}:= \frac{1}{\int u_\theta}u_\theta$ for non-negative neural networks $u_\theta$ so that $\int \widetilde{u_\theta} = 1$, it suffices to minimize
\begin{equation}\label{W2mms}
    \argmin_\theta \{ \mathcal{F} (\widetilde{u_\theta}) + \frac{\mathbb{W}_2^2(\widetilde{u_\theta}, u_k)}{2\tau}\},    
\end{equation}
for a given $u_k$ to obtain the minimizer $\widetilde{u_{\theta^{*(k+1)}}}$, which is an approximation of the minimizer of (\ref{JJKO}). As in the case $X=L^2(\Omega)$, we take the optimized $\widetilde{u_{\theta^{*(k+1)}}}$ in place of $u_k$ and repeat this process iteratively to derive the corresponding time-discretized gradient flow.

However, unlike the case $X=L^2(\Omega)$, there is no direct easy way to evaluate the squared metric, i.e. $\mathbb{W}_2^2(u,\rho)$, when minimizing the objective (\ref{W2mms}). There are many algorithms to approximate the $L^2$-Wasserstein distance, but we take an idea proposed in \cite{korotin2019wasserstein} for the reasons described in Section \ref{optimaltransport}. In other words, we approximate it by solving the optimization problem (\ref{W2objective}) at first and then by evaluating (\ref{W2approx}) using the optimal $\psi_{\theta^*}$ and $\overline{\psi_{w^*}}$. All the integrations in formulae (\ref{W2approx}) and (\ref{W2objective}) are again evaluated by Monte Carlo as described in Section \ref{l2estimate}. Moreover, the non-negativity assumption on neural networks can be easily satisfied by choosing non-negative activation functions such as Softplus \cite{dugas2001incorporating}, ReLU \cite{nair2010rectified}, ELU \cite{clevert2015fast}, etc.

\section{Experiments}
In this section, we show through various kinds of examples, that one can accurately approximate the solutions of PDEs which have gradient flow structures via neural networks. We divide this section into two subsections, each of which is devoted to gradients flows for the case $L^2$ and the case $\mathbb{W}_2$ respectively. For the $L^2$ gradient flow case, we demonstrate the scalability of the proposed method to the high-dimensional problems through the heat equation. Throughout this section, we apply the domain truncation technique for the unbounded domains (in our cases $\mathbb{R}^n$) as applied in the literature (See, for example, \cite{hwang2020trend,cho2021traveling}). We compute the $L^2$ error of a neural network solution $u_{NN}$ by $\|u - u_{NN}\|_{L^2}$ and the relative $L^2$ error by $\frac{\|u - u_{NN}\|_{L^2}}{\|u\|_{L^2}}$, where $u$ denotes either an analytic solution or a numerical solution. 

\paragraph{Motivating Example} We first consider a simple motivating example with the $L^2$ energy functional $\frac{\kappa}{2}\int_{\Omega} u^2 dx$. The corresponding minimization problem reads as :
\begin{equation}\label{motivating}
    \argmin_{\theta} \{\frac{\kappa}{2}\int_{\Omega} u_{\theta}^2 dx + \frac{d^2_{L^2(\Omega)}(u_{\theta}, u_{k})}{2\tau}\},
\end{equation} where $u_{\theta}$ is a neural network solution and $\theta$ denotes the set of parameters. We set an initial condition $u(0,x) = \sin(x)$, $\Omega = [-\pi, \pi]$, then the corresponding solution of the PDE is $u(t,x) = e^{-\kappa t}\sin(x)$. In Figure \ref{fig:motivating}, we demonstrate the minimizer $u_{\theta, k+1}$ approximately satisfies the Euler-Lagrange equation of \eqref{motivating}, i.e., \begin{equation}
    \frac{u_{\theta, k+1} - u_{\theta, k}}{\tau} = - \nabla F(u_{k+1}),\nonumber
\end{equation} during $40$ timesteps with $\tau= 0.01$.

\begin{figure}[h]
    \centering
    \includegraphics[width=0.9\textwidth]{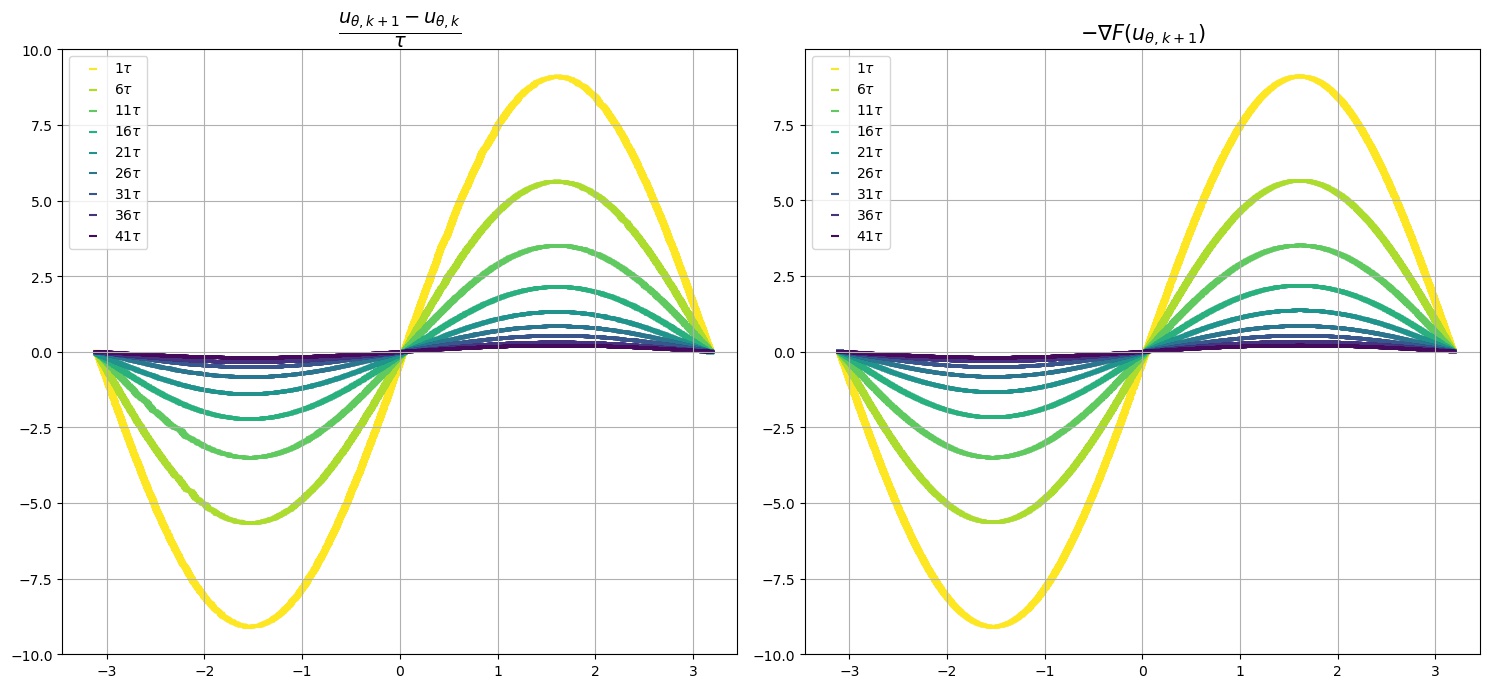}
    \caption{Left: Scatter plot for the values of $\frac{u_{\theta, k+1} - u_{\theta, k}}{\tau}$ on the uniform samples. Right: Scatter plot for the values of $-\nabla F(u_{\theta, k+1})$ on the same sample points.}
    \label{fig:motivating}
\end{figure}

\subsection{\texorpdfstring{$\boldsymbol{L^2}$}{TEXT}%
-gradient flows}
In this section, we consider two examples of $L^2$-gradient flows, the heat equation and the Allen-Cahn equation. 

\subsubsection{Heat equation: 2-dimensional, 8-dimensional}\label{subsubsec:2dheat}
The heat equation
\begin{equation}\label{heat}
    \begin{cases}
        \partial_t u = \kappa\Delta u \quad \text{in } \Omega, \\
        \nabla u \cdot \hat{\boldsymbol{n}}=0 \quad \text{on } \partial \Omega,
    \end{cases}
\end{equation}
where $\boldsymbol{n}$ denotes the unit normal vector, is the $L^2$-gradient flow of the Dirichlet energy $\mathcal{F}(u):=\frac{\kappa}{2}\int_\Omega ||\nabla u||^2 dx$.

For the special case $\Omega = [-a, a]^n$, where the domain is given by an $n$-dimensional cube, the ground-truth solution of (\ref{heat}) can be computed. Let the initial condition be given by $u(0,x) = \sum_{[i]} A_{[i]} \prod_k \cos(\frac{i_k\pi x_k}{a})$, where $[i]=(i_1,i_2,...,i_n)$ denotes the multi-index and $A_{[i]}$ denotes the coefficient of $\prod_k \cos(\frac{i_k\pi x_k}{a})$. Assuming the convergence of the initial condition $u(0,x)$, then
\begin{align}
    u(t,x) &= \sum_{[i]} A_{[i]} \biggr(\prod_k \cos(\frac{i_k\pi x_k}{a})\biggr)e^{-\kappa\mu_{[i]}^2t},\quad\mu_{[i]}^2 = \pi^2\sum_k (\frac{i_k}{a})^2
    \label{heat_known_sol}
\end{align}
solves (\ref{heat}) and satisfies the boundary condition. 

\paragraph{2-dimensional case}
In the experiment for 2-dimensional heat equation, we set $\kappa=0.1$ and $a = \pi/4$. The domain $\Omega$ is given by $[-\pi/4, \pi/4]^2$. For an initial condition $u_0(x) = 4-0.5\cos{4x_1}+0.6\cos{4x_2}+\cos{8x_1}\cos{4x_2}+\cos{8x_1}\cos{8x_2}$, its ground-truth solution $u(t,x)$ is computed as in (\ref{heat_known_sol}). A size of the time-step for the minimizing movement scheme is given by $\tau = 0.005$. 

\paragraph{Model training details for 2d case}
We use a 2-layer fully connected neural network with 256 hidden units for the model that approximates the solution $u(x)$. A hyperbolic tangent (tanh) activation function is applied. We choose the learning rate as $10^{-3}$ with an Adam optimizer \cite{kingma2014adam}. While training the neural network model, we uniformly picked 10,000 samples from $\Omega = [-\pi/4,\pi/4]^2$ and computed the minimization target (\ref{eq:L2_target}) by the Monte-Carlo approximation.

\paragraph{Results - 2d case} 
The approximated results are illustrated in Figure \ref{fig:heat_2d}. We visualized the ground-truth solution and the approximated neural network solution at the time $t=2\tau, 20\tau, 40\tau$. Overall, the deep minimizing movement scheme achieved a good result on the 2d heat equation. Our approximated neural network solution achieved the relative $L^2$ error around $3.15\times10^{-3}$ at $t=0.2=40\tau$.

\begin{figure}[h]
    \centering
    \includegraphics[width=0.95\textwidth]{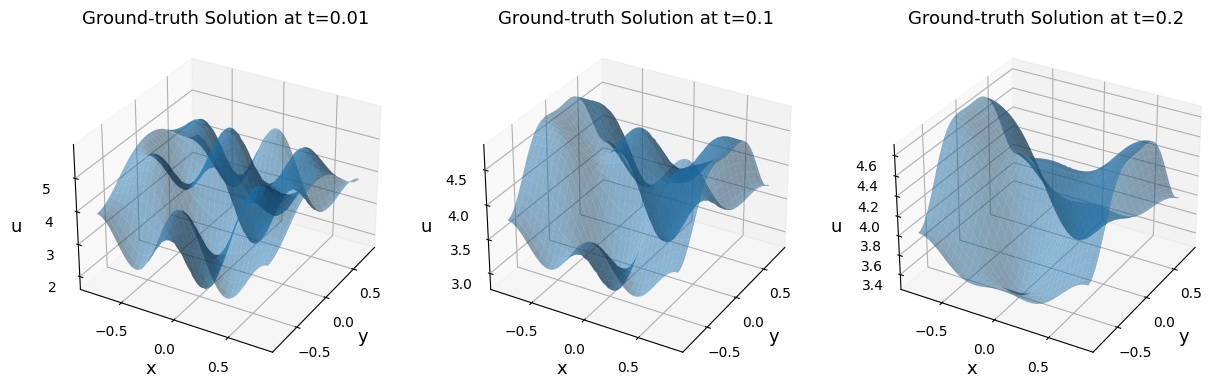}
    \includegraphics[width=0.95\textwidth]{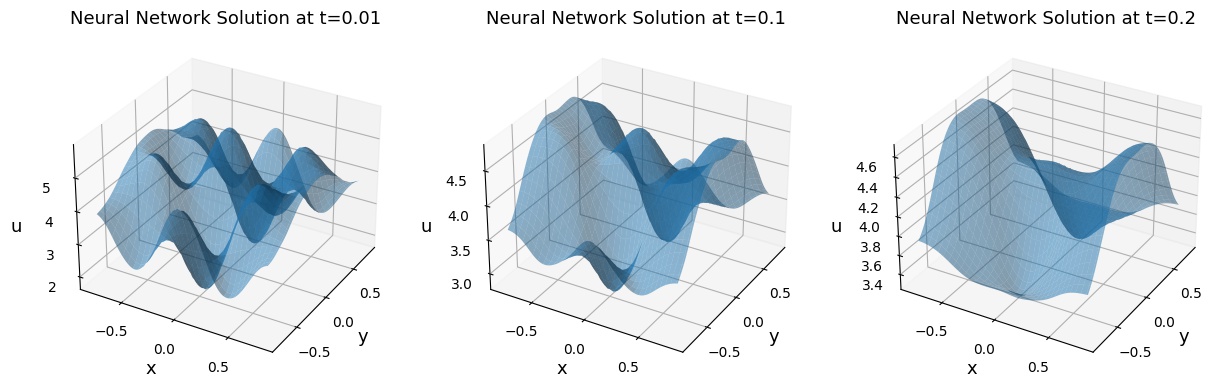} \\
    \vspace{0.4cm}
    \includegraphics[width=0.95\textwidth]{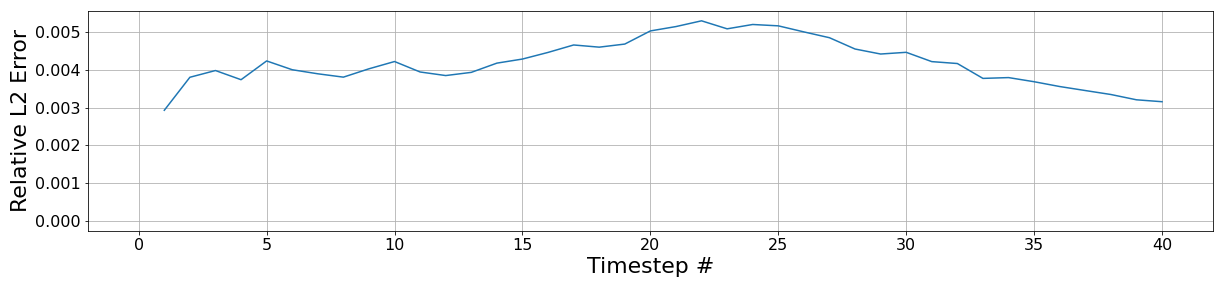}
    \caption{Ground-truth solution (top row) and our neural network solution (middle row) with deep minimizing movement scheme at time $t = 0.01, 0.1, 0.2$.  (At $t = 2\tau, 20\tau, 40\tau$) The relative $L^2$ error for each timestep is plotted in the graph at the bottom row.}
    \label{fig:heat_2d}
\end{figure}

\paragraph{8-dimensional case}
In the experiment for 8-dimensional heat equation, we set $\kappa=0.25$, and $a = \pi/4$. The domain $\Omega$ is given by $[-\pi/4, \pi/4]^{8}$. For an initial condition $u_0(x) = 1 - 0.2\cos{8x_2} + 2\cos{4x_4} - \cos{4x_6} + \cos{4x_8} + 2\cos{4x_1}\cos{4x_3}\cos{4x_8} + 2\cos{4x_2}\cos{4x_5}\cos{4x_7}$, its ground-truth solution $u(t,x)$ is computed by (\ref{heat_known_sol}), the same as the 2-dimensional case. A size of the time-step for the minimizing movement scheme is given by $\tau = 0.005$.

\paragraph{Model training details for 8d case}
For the neural network model that approximates the solution $u(x)$, a 3-layer fully connected neural network with 256 hidden units is used. A hyperbolic tangent (Tanh) activation function is applied. We choose the learning rate as $10^{-5}$ with an Adam optimizer. While training the neural network model, we uniformly picked 50,000 samples from $\Omega = [-\pi/4,\pi/4]^8$ and computed the minimization target (\ref{eq:L2_target}) by the Monte-Carlo approximation.

\begin{figure}[h]
    \centering
    \includegraphics[width=0.99\textwidth]{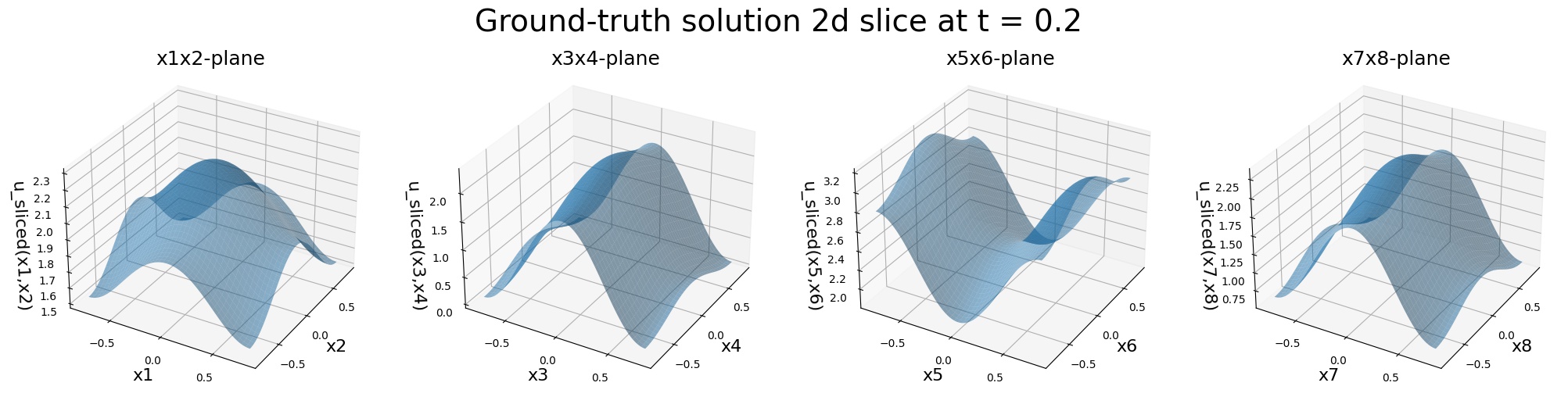}
    \includegraphics[width=0.99\textwidth]{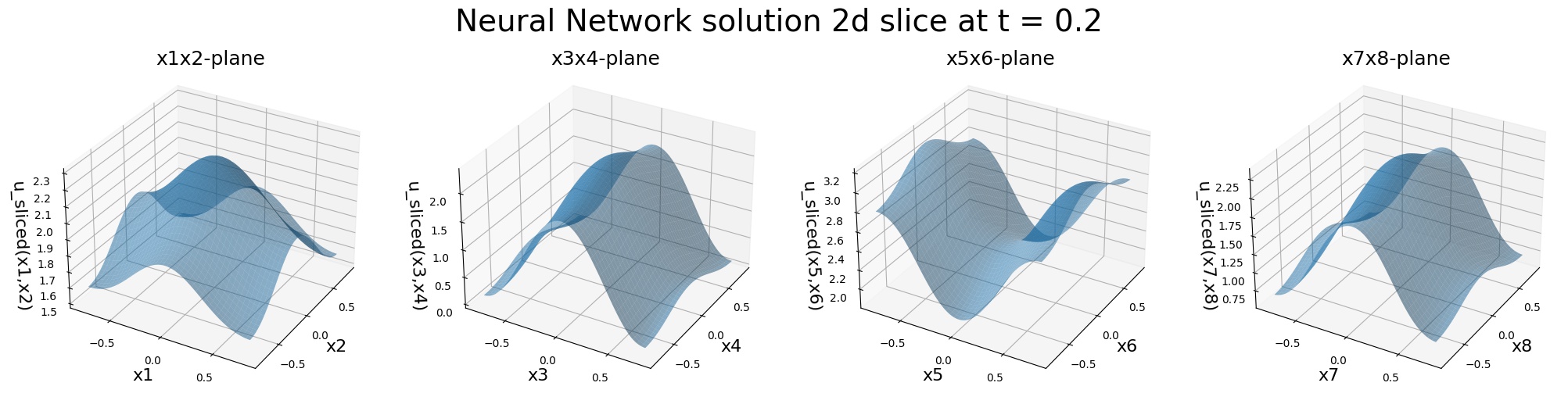} \\
    \vspace{0.4cm}
    \includegraphics[width=0.99\textwidth]{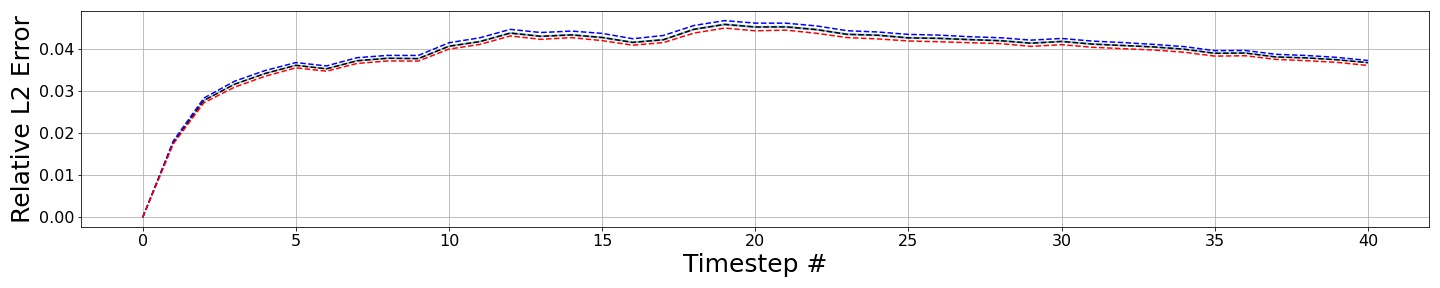}
    \caption{Ground-truth solution (top row) and our neural network solution (middle row) with deep minimizing movement scheme at time $t = 0.2 = 40\tau$. The relative $L^2$ error for each timestep is plotted in the graph at the bottom row. In the relative $L^2$ error plot, we estimated the error with 500 trials, each evaluated on 50,000 points that are uniformly sampled from $\Omega$. Blue dotted line indicates the maximum error and the red dotted line indicates the minimum error among the 500 trials for each timestep. As the plots of the maximum error and the minimum error are almost coincident, it can be deduced that the evaluated result is surely robust.}
    \label{fig:heat_8d}
\end{figure}
 
\paragraph{Results - 8d case}
For the high dimensional heat equation, we illustrated our approximated neural network solution and its relative $L^2$ error result in Figure \ref{fig:heat_8d}, by visualizing the solution on the 4 hyperplanes $x_1x_2$, $x_3x_4$, $x_5x_6$, $x_7x_8$-planes. Each solution $u_{sliced}(x_i,x_{i+1})$ was evaluated and visualized by fixing the other 6 variables to zero and varying $(x_i, x_{i+1}) \in [-\pi/4, \pi/4]^2$ for $i = 1, 3, 5, 7$. We plotted the ground-truth solution and the approximated neural network solution at the time $t=40\tau$. Its relative $L^2$ error is estimated 500 times, each with 50,000 points that are uniformly sampled from the domain $\Omega$. The deep minimizing movement scheme also achieved a good result on the 8d heat equation. Our approximated neural network solution achieved the relative $L^2$ error around $3.67\times10^{-2}$ at $t=0.2=40\tau$. These results clearly showed the applicability of our proposed method as a cornerstone of dealing with high dimensional PDEs. 

\subsubsection{Allen-Cahn equation: 2-dimensional}

The Allen-Cahn equation
\begin{equation}
    \begin{cases}
        \partial_t u = \Delta u - \epsilon^{-2}F'(u) \quad \text{in } \Omega,\\
        \nabla u \cdot \hat{\boldsymbol{n}}=0 \quad \text{on } \partial \Omega,
    \end{cases}
    \label{ac_eq}
\end{equation}

where $\boldsymbol{n}$ denotes the unit normal vector, is the $L^2$-gradient flow of the functional $I_\epsilon(u):=\frac{1}{2}\int ||\nabla u||^2 dx + \epsilon^{-2} \int F(u) dx$, where $\epsilon>0$ and $F$ is a double well potential. In our experiment, we consider the case where the double well potential $F(s)$ is given by $\frac{(s^2-1)^2}{4}$. We set $\epsilon=0.25$ and the domain $\Omega = [-2,2]\times[-2,2]$. The ground-truth solution of (\ref{ac_eq}) is computed with a forward Euler scheme, where the initial condition in Fig. \ref{aceq_init_figs} is examined. A size of the time-step for the minimizing movement scheme is given by $\tau = 0.005$.
\begin{figure}[h]
\centering
    \includegraphics[width=0.17\textwidth]{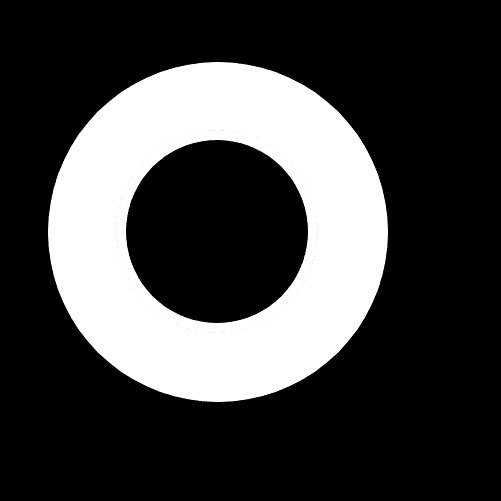}
    \includegraphics[width=0.17\textwidth]{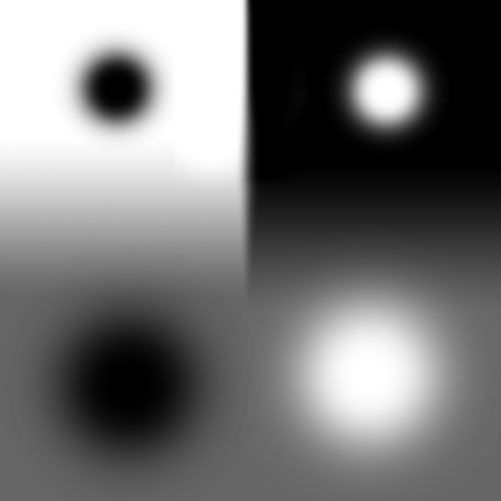}
    \caption{A hard torus-shape initial condition (left) and a 4-point initial condition (right) are used for the experiment on the Allen-Cahn equation. Black color indicates -1 and white color indicates 1.}
    \label{aceq_init_figs}
    \vspace{-0.5cm}
\end{figure}
\paragraph{Model training details} For the neural network model that approximates the solution $u(x)$, a 2-layer fully connected neural network with 256 hidden units is used. An exponential linear unit (ELU) activation function is applied. We choose the learning rate as $10^{-3}$ with an Adam optimizer. As in the 2d heat equation case, we uniformly picked 10,000 samples from $\Omega = [-2, 2]^2$ and computed the minimization target.
\paragraph{Results} The results for the hard torus-shape initial condition and the 4-point initial condition in Fig. \ref{aceq_init_figs} are visualized in Fig. \ref{aceq_result1} and Fig. \ref{aceq_result2} respectively. The ground-truth solution is obtained with the forward Euler scheme. We visualized the ground-truth solution and the approximated neural network solution at the time $t=2\tau, 20\tau, 40\tau$ and plotted the relative $L^2$ error for each timestep. The trained models for the two initial conditions at $t=0.2=40\tau$ achieved its relative $L^2$ error $3.76\times10^{-2}$ and $6.69\times10^{-2}$ respectively. Although the borderline area between -1 and +1 evolves rapidly and has a steep slope, the neural network approximator and our proposed deep minimizing movement scheme achieved good results for the given two initial conditions.

\begin{figure}[h]
    \centering
    \includegraphics[width=0.95\linewidth]{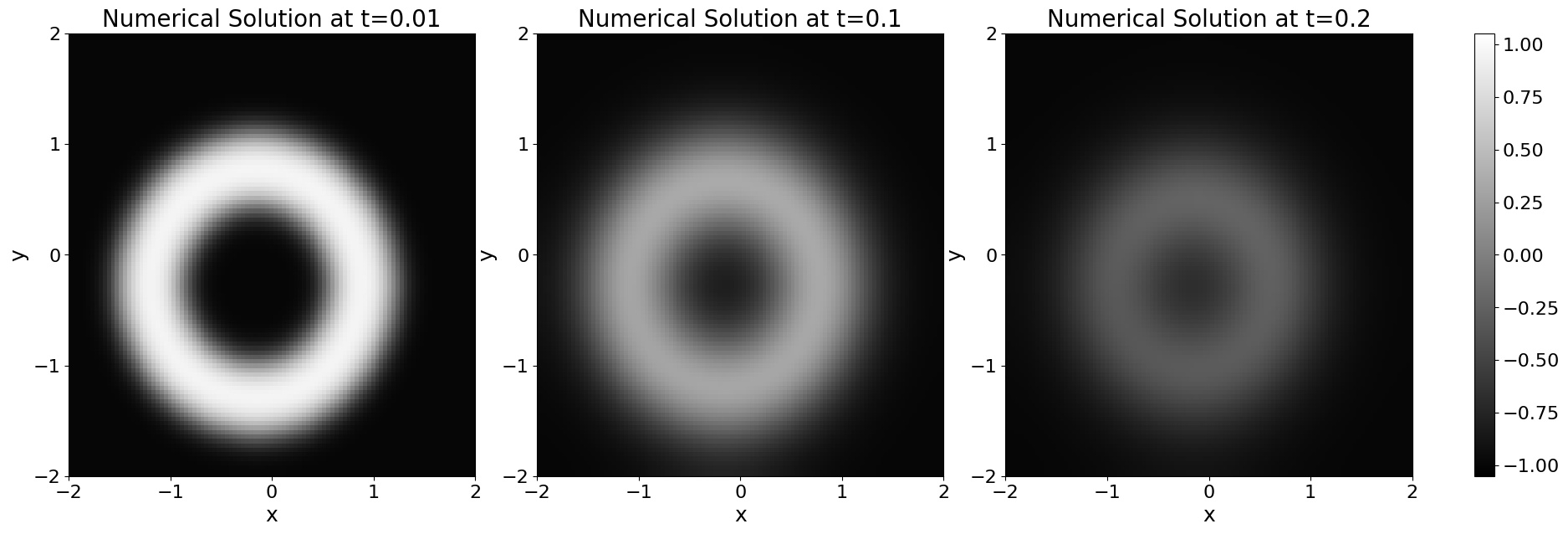}
    \includegraphics[width=0.95\linewidth]{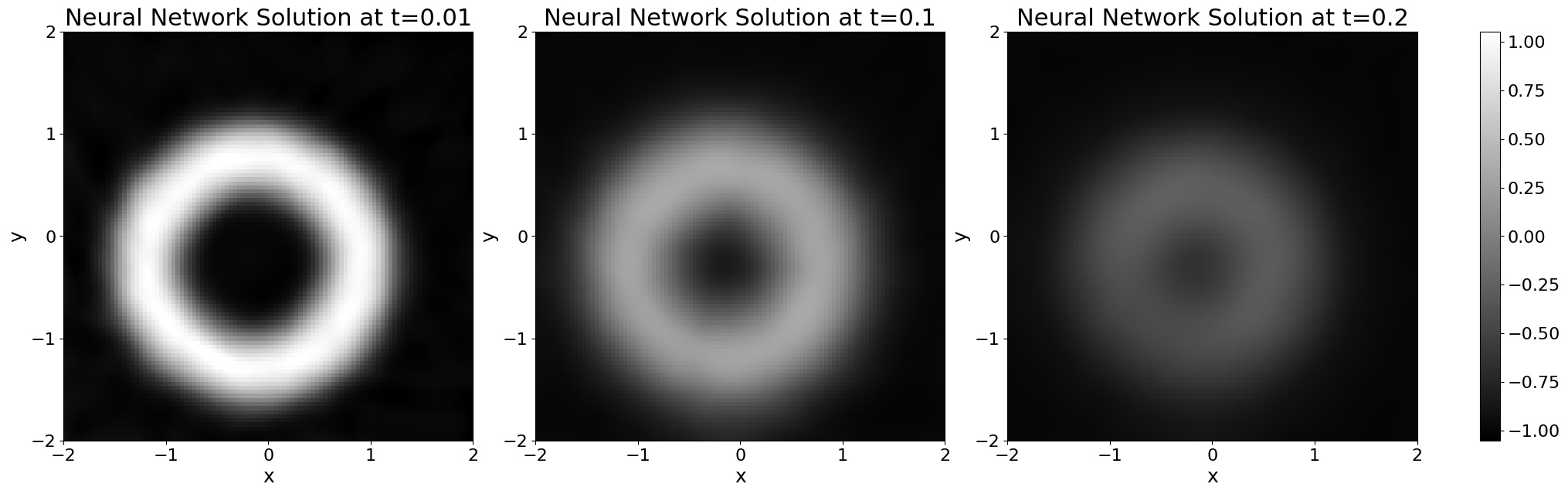} \\
    \vspace{0.1cm}
    \includegraphics[width=0.95\linewidth]{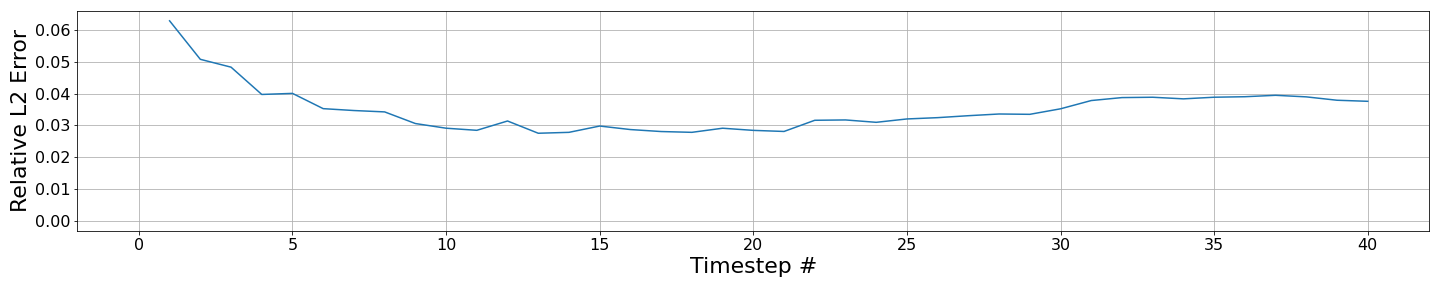}
    \caption{Ground-truth numerical solution (top row) and our neural network solution (middle row) with deep minimizing movement scheme of the Allen-Cahn equation at time $t = 0.01, 0.1, 0.2$. (At $t = 2\tau, 20\tau, 40\tau$), based on the hard torus-shape initial condition. Its relative $L^2$ error for each timestep is plotted in the graph at the bottom row.}
    \label{aceq_result1}
\end{figure}

\begin{figure}[h]
    \centering
    \includegraphics[width=0.95\linewidth]{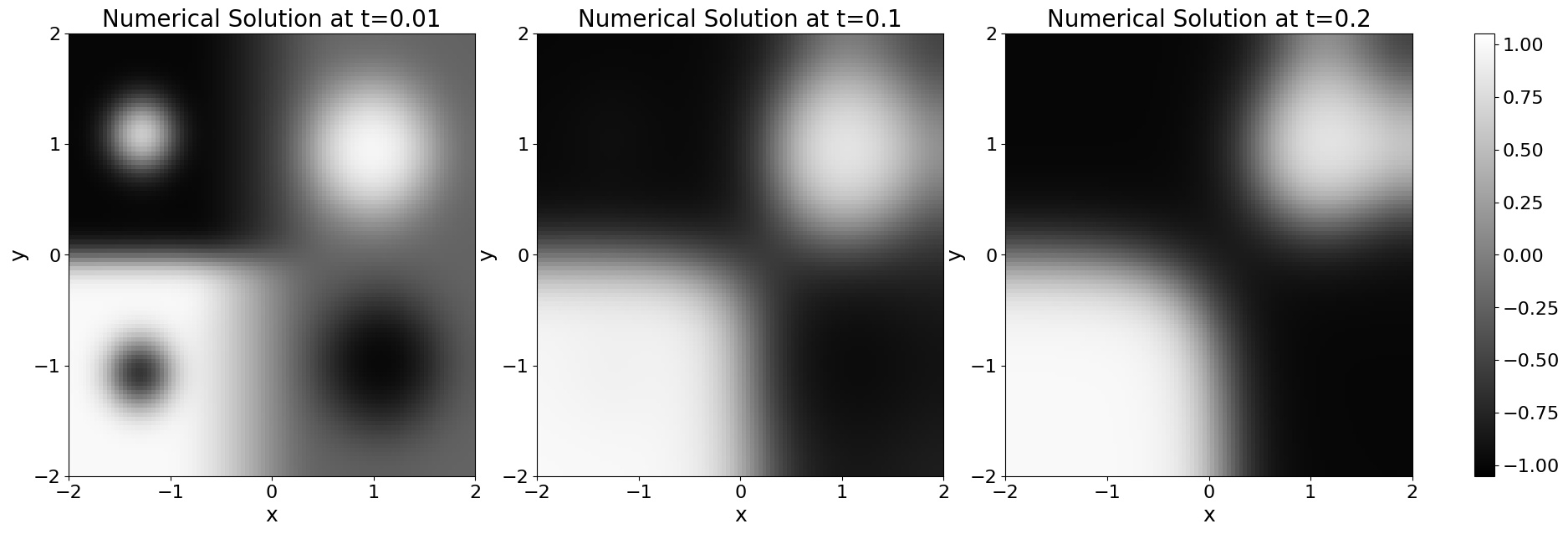}
    \includegraphics[width=0.95\linewidth]{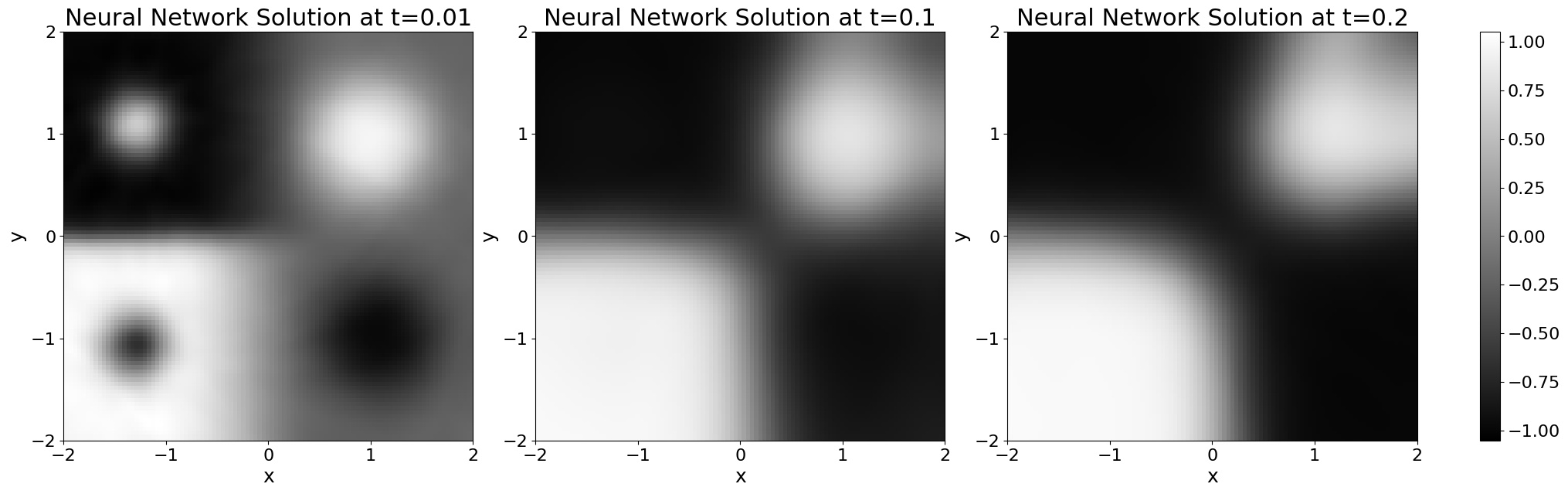} \\
    \vspace{0.1cm}
    \includegraphics[width=0.95\linewidth]{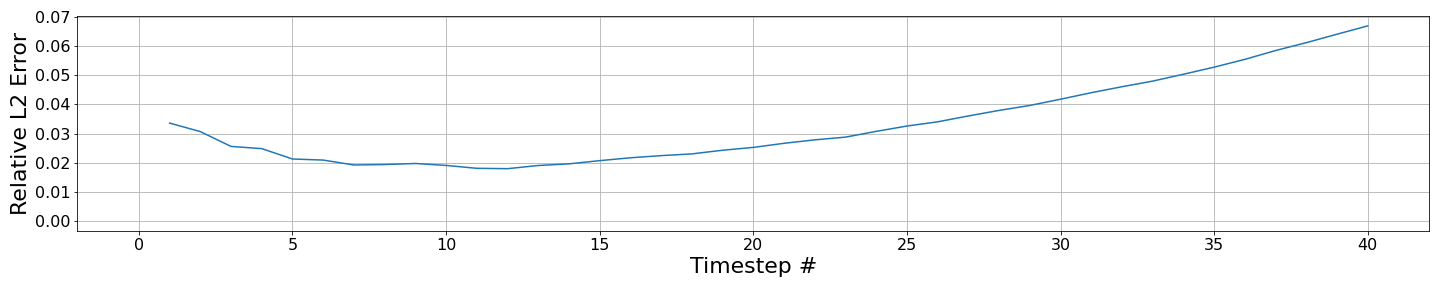}
    \caption{Ground-truth numerical solution (top row) and our neural network solution (middle row) with deep minimizing movement scheme of the Allen-Cahn equation at time $t = 0.01, 0.1, 0.2$. (At $t = 2\tau, 20\tau, 40\tau$), based on the 4-point initial condition. Its relative $L^2$ error for each timestep is plotted in the graph at the bottom row.}
    \label{aceq_result2}
\end{figure}

\subsection{\texorpdfstring{$\boldsymbol{\mathbb{W}_2}$}{TEXT}%
-gradient flows}

In this section, we consider several examples of $\mathbb{W}_2$-gradient flows. In the following experiments, we used the Softplus activation function for the last layer to guarantee the positivity of neural networks. Activation functions used in the remaining layers except for the last layer are specified for each experiment.

\subsubsection{Heat equation: 2-dimensional}
We consider 
\begin{equation}\label{heat_eqn_W2}
    \begin{cases}
        \partial_t u = \kappa\Delta u \quad \text{in } [0,T] \times [-\pi, \pi]^2, \\
        \nabla u \cdot \hat{\boldsymbol{n}}=0 \quad \text{on } [0,T] \times \partial [-\pi, \pi]^2, \\
        u(0,x_1, x_2) =  (1 + \frac{1}{5} \cos(x_1) + \frac{1}{5} \cos(x_2)) / (4 \pi^2),
    \end{cases} \nonumber
\end{equation} which corresponds to a $\mathbb{W}_2$-gradient flow of the entropy functional $\mathcal{F}(u):=\int_\Omega u(x) \log u(x) dx$.

\paragraph{Model training details} In this numerical experiment, we employ a 4-layer fully connected neural network with 512 hidden units with cosine activation function. We use Adam optimizer with a learning rate $2\times 10^{-4}$. We set $\tau = 0.005$. For each training epoch, we sample 20,000 points from $[-\pi,\pi]\times[-\pi,\pi]$ uniformly. Numerical integrations are computed by using the Monte-Carlo method.

\begin{figure}[h]
     \centering
     \begin{subfigure}[b]{0.5\textwidth}
         \centering
         \includegraphics[width=\textwidth]{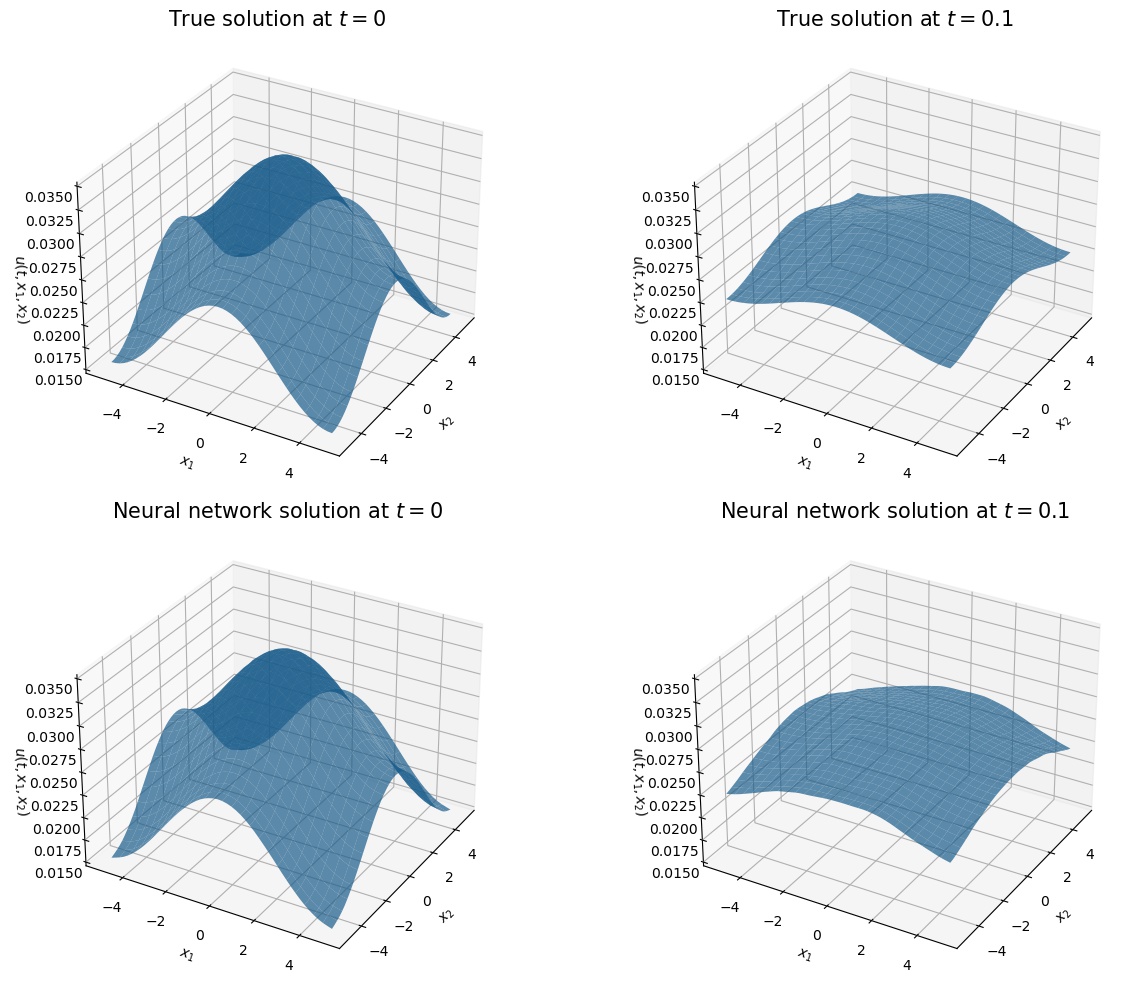}
         \label{fig:Heat_2d_solution}
     \end{subfigure}
     \hfill
     \begin{subfigure}[b]{0.48\textwidth}
         \centering
         \includegraphics[width=\textwidth]{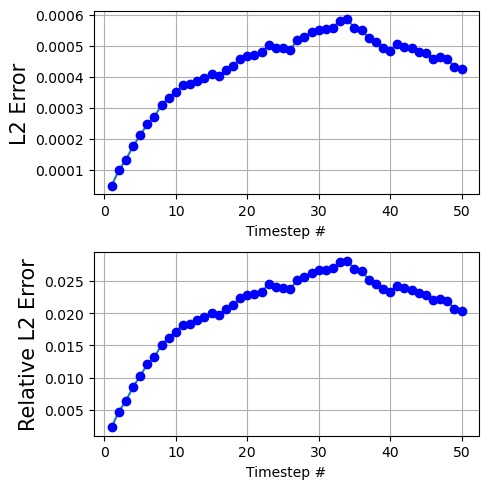}
         \label{fig:Heat_2d_error}
     \end{subfigure}
     \hfill
        \caption{Left: Analytic solutions and approximated solutions of \eqref{heat_eqn_W2} at $t=\tau, 51\tau$. Right: $L^2$ error and relative $L^2$ error of an approximated solution.}
        \label{fig:Heat_2d}
\end{figure}

\subsubsection{Porous Medium: 1-dimensional}

The porous medium equation
\begin{equation}\label{PM_eqn}
    \partial_t u(t,x) = \Delta u^m, \quad \text{in } [0,T]\times\mathbb{R}^n
\end{equation}
is the $\mathbb{W}_2$-gradient flow of the energy $\mathcal{F}(u):=\frac{1}{m-1}\int_{\mathbb{R}^n}u^m(x)dx$.

A fundamental example of exact solution of this equation was obtained independently by Barenblatt and Pattle \cite{carrillo2000asymptotic}, which are densities of the form

\begin{equation}\label{porous_sol}
    u(t,x) = (t+t_0)^{-kn}\bigg(C- \frac{(m-1)k}{2m}||x||^2t^{-2k}\bigg)_+^{\frac{1}{m-1}},
\end{equation}
where $k=(n(m-1)+2)^{-1}$ and the positive constant $C$ is defined by the identity $\int_{\mathbb{R}^n} u(t,x) dx = 1$.

In this experiment, we set $t_0=10^{-1}$, $m=2$, and $C=(\frac{3}{64})^{\frac{1}{3}}$. Moreover, we set $u(x,0)$ in the equation \eqref{porous_sol} to be the initial condition.

\paragraph{Model training details} In this numerical experiment, we employ a 5-layer fully connected neural network with 512 hidden units with ELU activation function. We use Adam optimizer with a learning rate $2\times 10^{-4}$. We set $\tau=0.005$ and truncated the domain $\mathbb{R}$ by $[-3,3]$ so that the numerical integration of the initial condition has a negligible error. For each training epoch, we sample $3,000$ points from $[-3,3]$ uniformly. Numerical integrations in the algorithm are computed by using the Monte-Carlo method.

\paragraph{Results}

Both the neural network solution and the ground-truth solution over time until $t=60\tau$ are plotted in Figure \ref{fig:PM_1d_sol} and error estimation is given in Figure \ref{fig:PM_1d_error}. As shown in Figure \ref{fig:PM_1d_sol}, despite the accumulation of errors over time, it fits well for a fairly long time.

\begin{figure}[h]
    \centering
    \includegraphics[width=\textwidth]{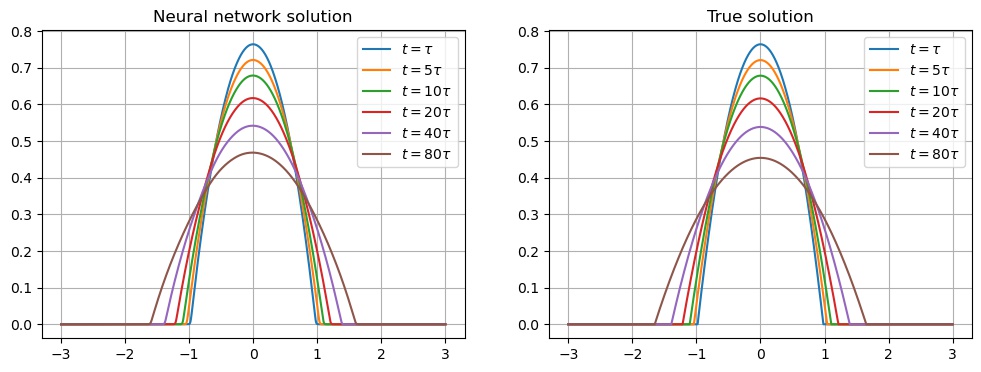}
\caption{Neural network solution and the ground-truth solution for the 1-dimensional porous medium equation over timesteps $t=\tau,\cdots, 60\tau$.}
\label{fig:PM_1d_sol}
\end{figure}

\begin{figure}[h]
    \centering
    \includegraphics[width=\textwidth]{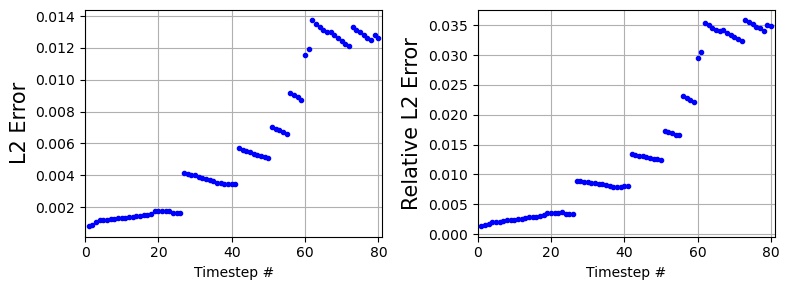}
\caption{Error estimation of the neural network solution for the porous medium equation.}
\label{fig:PM_1d_error}
\end{figure}

\subsubsection{Porous medium equation: 2-dimensional}
In this experiment, we set $t_0=10^{-1}, m=2$, and $C=\frac{1}{\sqrt{8\pi}}$. As in the $1$-dimensional case, we take the $u(x,0)$ in the equation $\eqref{porous_sol}$ to be the initial condition.

\paragraph{Model training details} In this numerical experiment, we employ a 5-layer fully connected neural network with 512 hidden units with ELU activation function. We use Adam optimizer with a learning rate $2\times 10^{-4}$. We set $\tau=0.005$ and truncated the domain $\mathbb{R}^2$ by $[-3,3]\times [-3,3]$ so that the numerical integration of the initial condition has a negligible error. For each training epoch, we sample $10,000$ points from $[-3,3]\times [-3,3]$ uniformly. Numerical integrations in the algorithm are computed by using the Monte-Carlo method.

\paragraph{Results}

We summarize the results in Figure \ref{fig:PM_2d} and Figure \ref{fig:PM_2d_error}. In Figure \ref{fig:PM_2d}, the neural network solution and the ground-truth solution are both plotted for timesteps $t=\tau$ and $t=60\tau$. As in the 1-dimensional case, neural network solution fits well for a fairly long time. It worths noticing that since we sampled more points in this experiment than in the 1-dimensional experiment, the errors are smaller.

\begin{figure}[h]
     \centering
     \begin{subfigure}[b]{0.9\textwidth}
         \centering
         \includegraphics[width=\textwidth]{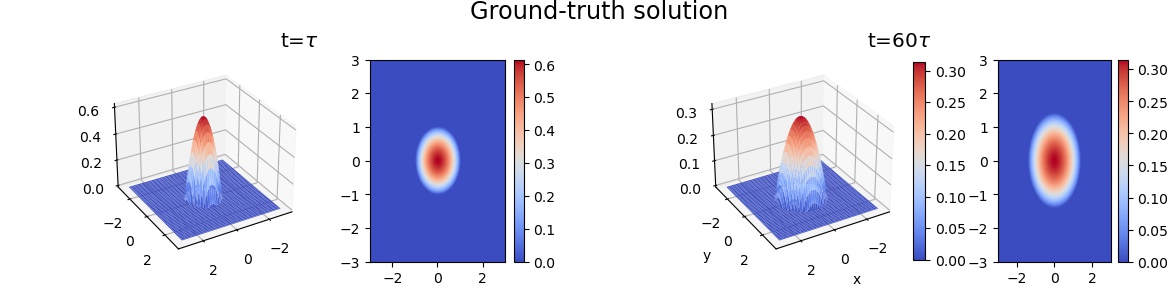}
         \label{fig:PM_2d_GT}
     \end{subfigure}
     \hfill
     \begin{subfigure}[b]{0.9\textwidth}
         \centering
         \includegraphics[width=\textwidth]{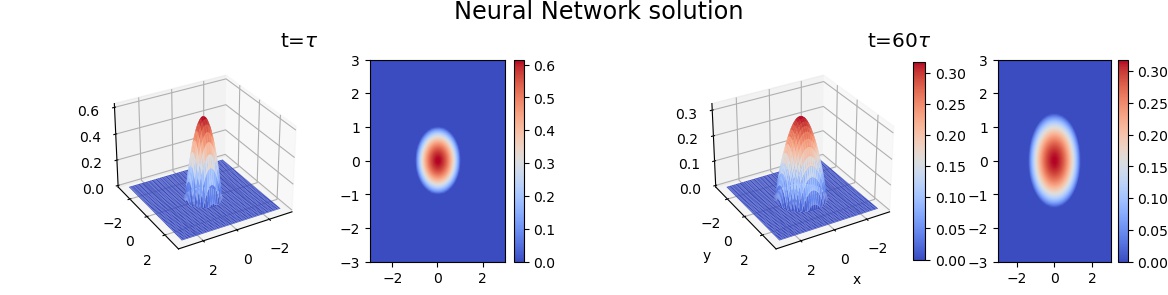}
         \label{fig:PM_2d_NN}
     \end{subfigure}
     \hfill
        \caption{Neural network solution and the ground-truth solution for the 2-dimensional porous medium equation for timesteps $t=\tau$ and $t=60\tau$.}
        \label{fig:PM_2d}
\end{figure}

\begin{figure}[h]
    \centering
    \includegraphics[width=\textwidth]{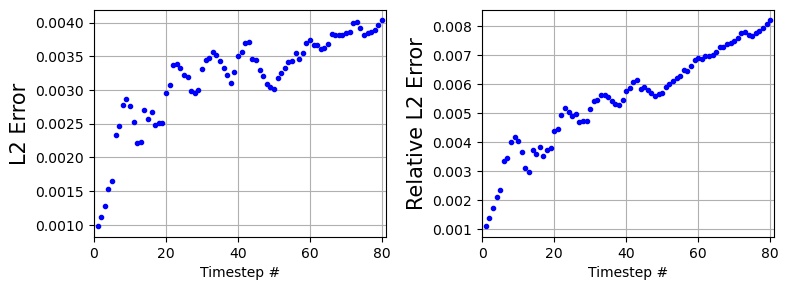}
\caption{Error estimation of the neural network solution for the 2-dimensional porous medium equation.}
\label{fig:PM_2d_error}
\end{figure}

\subsubsection{Fokker-Planck equation: 2-dimensional}
The Fokker-Planck equation
\begin{equation}\label{FP_eqn}
    \partial_t u(t,x) = \nabla \cdot (u \nabla V) + \Delta u, \quad \text{in } [0,T]\times\mathbb{R}^n
\end{equation}
is the $\mathbb{W}_2$-gradient flow of the energy $\mathcal{F}(u):=\int_{\mathbb{R}^2} V(x)u(x) + u(x)\log u(x)dx$. In this numerical example, we set the initial condition to be the standard Gaussian distribution and set $V(x) = \frac{1}{2} (x-\mu)^T \Sigma^{-1} (x-\mu)$, with $\mu = [\frac{1}{3}, \frac{1}{3}]$, and $\Sigma=\begin{bmatrix}
\frac{5}{8} & -\frac{3}{8} \\
-\frac{3}{8} & \frac{5}{8} 
\end{bmatrix}$.
Then, an analytic solution of \eqref{FP_eqn} reads
\begin{equation}\label{FP_2d_analytic}
    u(t,x) = \mathcal{N}(\mu(t), \Sigma(t)) := \frac{1}{2\pi \sqrt{|\Sigma(t)|}} \exp{-\frac{1}{2} (x-\mu(t))^T \Sigma(t)^{-1} (x-\mu(t))},
\end{equation}
where $\mu(t) = (1-e^{-4t})\mu$, $\Sigma(t) = \begin{bmatrix}
\frac{5}{8} + \frac{3}{8}\times e^{-8t} & -\frac{3}{8} + \frac{3}{8} \times e^{-8t} \\
-\frac{3}{8} + \frac{3}{8} \times e^{-8t} & \frac{5}{8} + \frac{3}{8} \times e^{-8t} 
\end{bmatrix}$.

\paragraph{Model training details} In this numerical experiment, we employ a 4-layer fully connected neural network with 512 hidden units with ReLU activation function. We use Adam optimizer with a learning rate $2\times 10^{-4}$. We set $\tau = 0.005$ and truncated the domain $\mathbb{R}^2$ by $[-5,5]\times[-5,5]$ so that the numerical integration of the initial condition has a negligible error. For each training epoch, we sample 10,000 points from $[-5,5]\times[-5,5]$ uniformly. Numerical integrations in the algorithm are computed by using the Monte-Carlo method.

\paragraph{Results} We summarize the results in Figure \ref{fig:FP_2d}. The left panel of Figure \ref{fig:FP_2d} shows both an analytic solution \eqref{FP_2d_analytic} and a neural network solution at $t=\tau$, and $t=21\tau$. The right panel of the figure shows trajectories of mean vector of an analytic solution and a neural network solution. We achieved a relative error less than 0.01 (averaged in time) in the truncated domain, where $u_{NN}$ denotes a neural network solution.

\begin{figure}[h]
     \centering
     \begin{subfigure}[b]{0.5\textwidth}
         \centering
         \includegraphics[width=\textwidth]{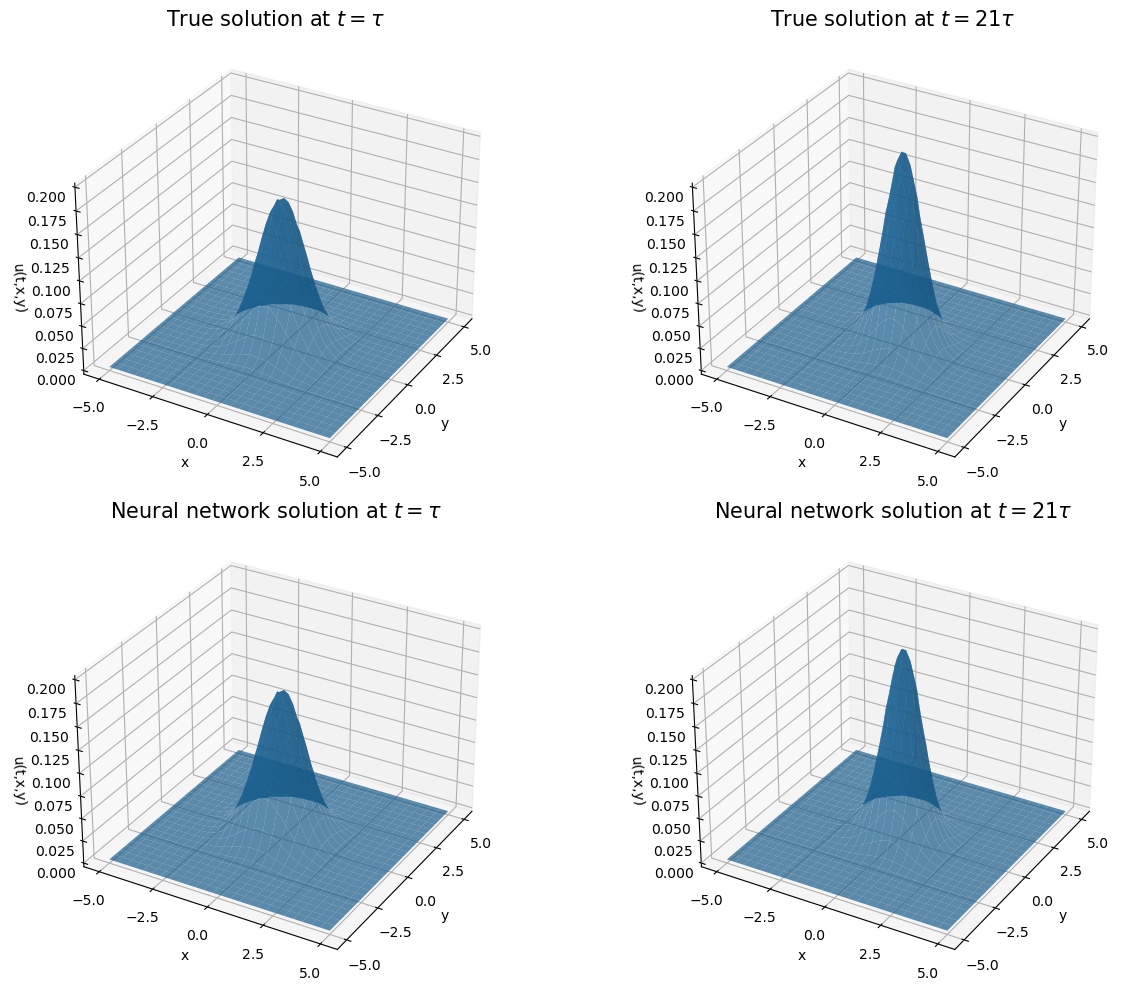}
         \label{fig:FP_2d_solution}
     \end{subfigure}
     \hfill
     \begin{subfigure}[b]{0.45\textwidth}
         \centering
         \includegraphics[width=\textwidth]{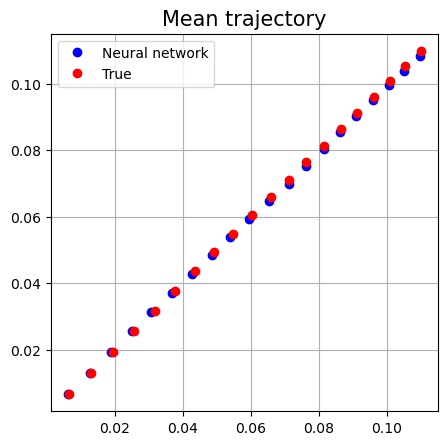}
         \label{fig:FP_2d_trajectory}
     \end{subfigure}
     \hfill
        \caption{Left: Analytic solutions and approximated solutions of \eqref{FP_eqn} at $t=\tau,\cdots, 21\tau$. Right: Mean trajectories of an analytic solution and an approximated solution.}
        \label{fig:FP_2d}
\end{figure}

\subsubsection{Fokker-Planck equation: 4-dimensional}

In this experiment, we set the initial condition to be the standard Gaussian distribution and $V(x)=\frac{1}{2}(x-\mu)^T \Sigma^{-1} (x-\mu)$, where $\mu=[\frac{1}{3},\frac{1}{3},0,0]$ and $\Sigma = \begin{bmatrix}
\frac{5}{8} & -\frac{3}{8} \\
-\frac{3}{8} & \frac{5}{8} 
\end{bmatrix} \bigoplus \begin{bmatrix}
1 & 0 \\
0 & 1 
\end{bmatrix}$. In this case, the analytic solution of \eqref{FP_eqn} is given by \eqref{FP_2d_analytic}, where $\mu(t)=[\frac{1}{3}(1-e^{-4t}), \frac{1}{3}(1-e^{-4t}), 0, 0]$ and $\Sigma(t) = \begin{bmatrix}
\frac{5}{8} + \frac{3}{8}\times e^{-8t} & -\frac{3}{8} + \frac{3}{8} \times e^{-8t} \\
-\frac{3}{8} + \frac{3}{8} \times e^{-8t} & \frac{5}{8} + \frac{3}{8} \times e^{-8t} 
\end{bmatrix} \bigoplus \begin{bmatrix}
1 & 0 \\
0 & 1 \end{bmatrix}$.

\paragraph{Model training details} In this numerical experiment, we employ a 5-layer fully connected neural network with 256 hidden units with ELU activation function. We use Adam optimizer with a learning rate $2\times 10^{-4}$. We set $\tau=0.005$ and truncated the domain $\mathbb{R}^4$ by $[-3.5,3.5]^4$, and in this case, the error of the numerical integration of the initial condition on this domain and the whole domain is less than $2\%$. For each training epoch, we sampled 160,000 points from $[-3.5,3.5]^4$ uniformly. Numerical integrations in the algorithm are computed by using the Monte-Carlo method.

\paragraph{Results} The mean trajectories of the neural network solution and the ground-truth solution along timesteps $t=\tau,\cdots,60\tau$ are given in Figure \ref{fig:FP_4d_trajectory}. The first and second coordinates of the mean trajectories of both neural network solution and the ground-truth solution are in the relation $y=x$ as illustrated in the left top of Figure \ref{fig:FP_4d_trajectory}. The right panel of Figure \ref{fig:FP_4d_trajectory} is the mean trajectories of both neural network solution and the ground-truth solution over $t=0,\cdots, 20\tau$, and it can be checked that they are overlapping.

\begin{figure}[h]
    \centering
    \includegraphics[width=\textwidth]{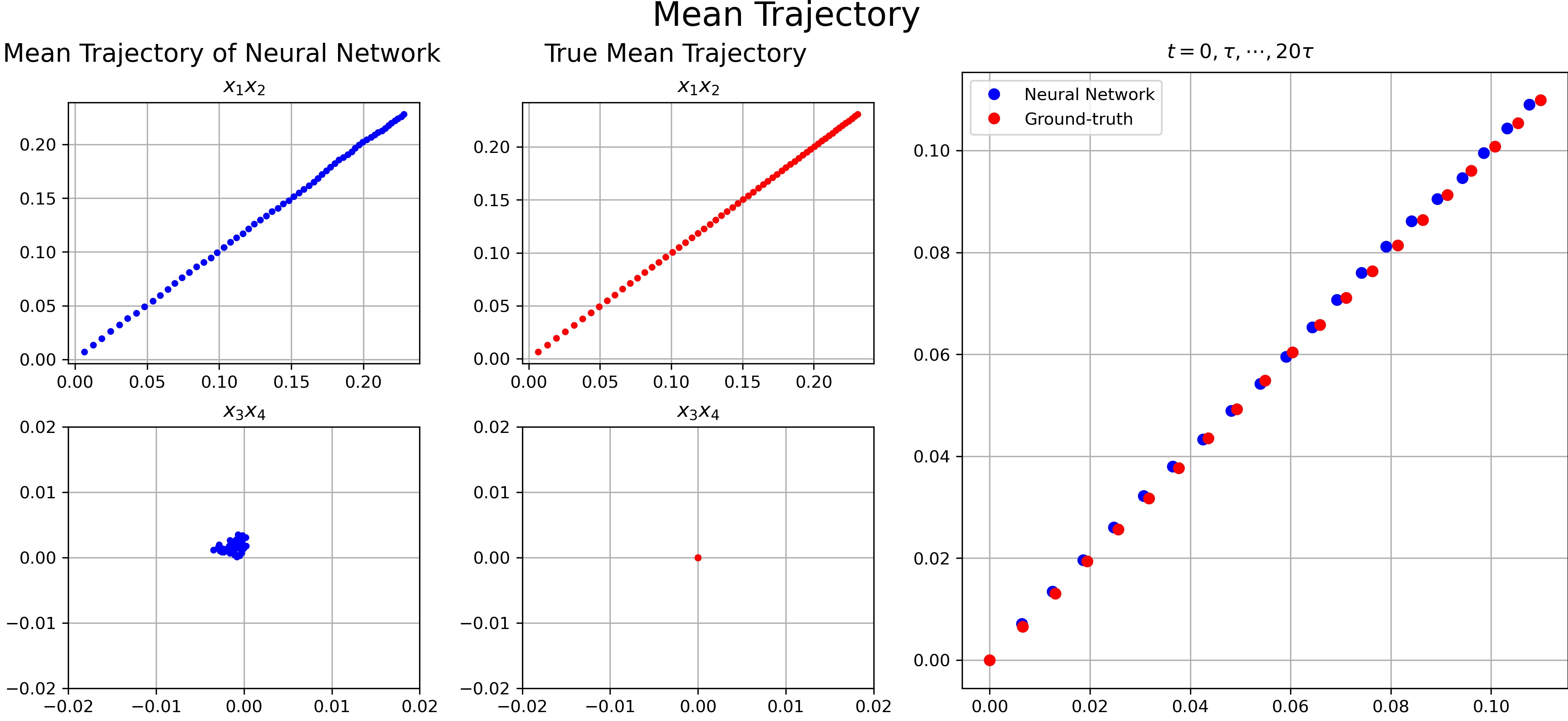}
\caption{Mean trajectories of our optimized neural network and the ground-truth solution. The trajectories taken from $\tau$ to $60\tau$ are plotted with respect to the first and the second coordinates on the top left, and with respect to the third and fourth coordinates on the bottom left. On the right, trajectories of neural network and the ground-truth solution taken for $t=0$~$20\tau$ are plotted in the same plot together for detailed comparison.}
\label{fig:FP_4d_trajectory}
\end{figure}

The heat maps of cross-sectional $u_{sliced}$'s are given in Figure \ref{fig:FP_4d}. The ground-truth solution of $u_{sliced}(x_3,x_4)$ converges to a Gaussian distribution with the variance $\begin{bmatrix}
\frac{5}{8} & -\frac{3}{8} \\
-\frac{3}{8} & \frac{5}{8} 
\end{bmatrix}$ while the ground-truth solution of $u_{sliced}(x_1,x_2)$ remains unchanged. As time goes by, the shape of the variance of the neural network solution of $u_{sliced}(x_3,x_4)$ follows $\begin{bmatrix}
\frac{5}{8} & -\frac{3}{8} \\
-\frac{3}{8} & \frac{5}{8} 
\end{bmatrix}$. Meanwhile, even with the passage of time, the neural network solution of $u_{sliced}(x_3,x_4)$ remains the same as the standard Gaussian distribution.

\begin{figure}[h]
     \centering
     \begin{subfigure}[b]{0.95\textwidth}
         \centering
         \includegraphics[width=\textwidth]{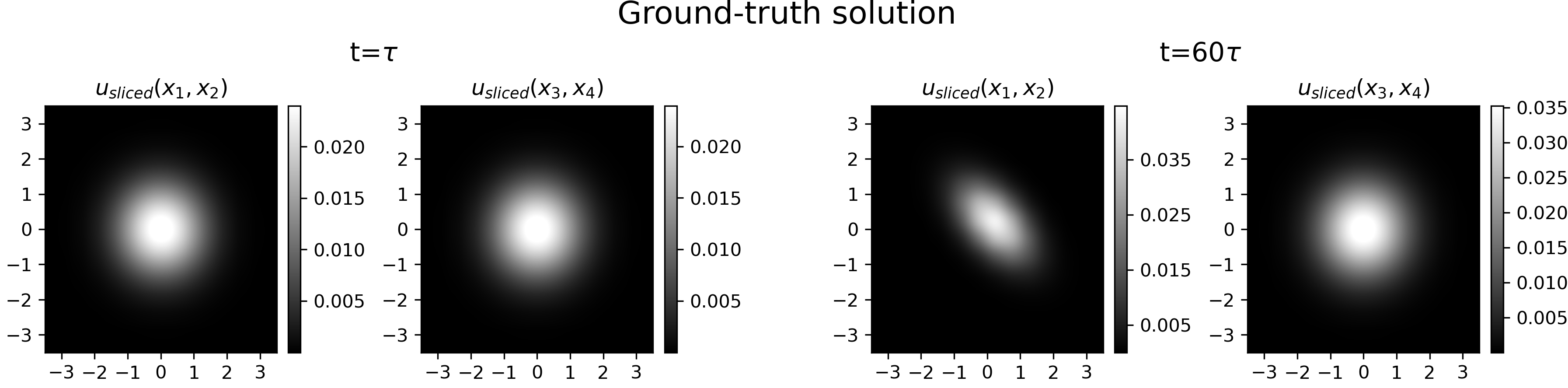}
         \label{fig:FP_4d_GT_slice}
     \end{subfigure}
     \hfill
     \begin{subfigure}[b]{0.95\textwidth}
         \centering
         \includegraphics[width=\textwidth]{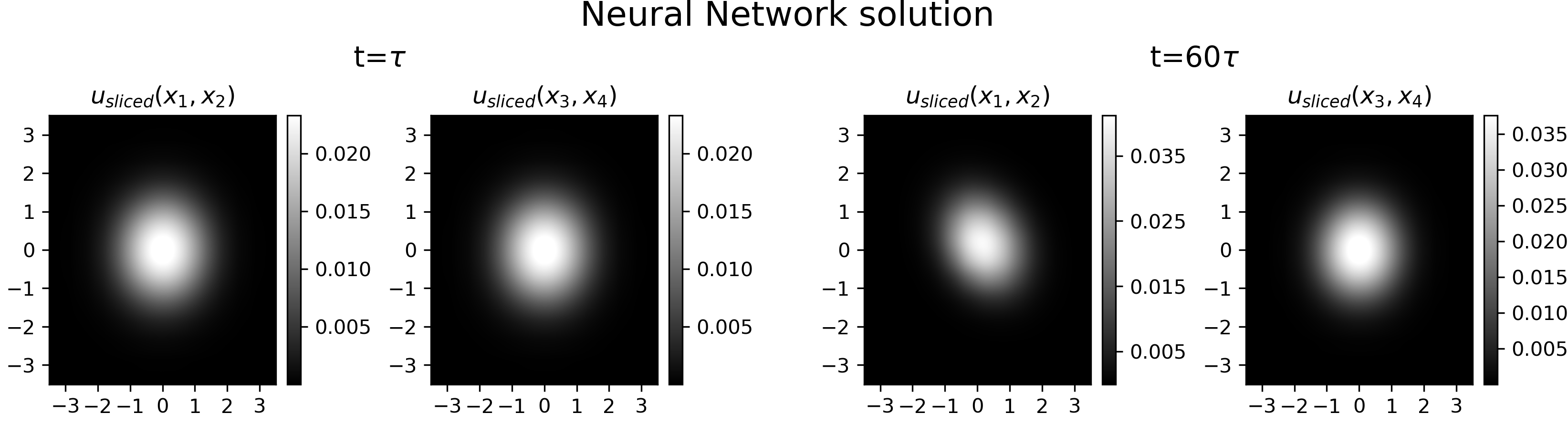}
         \label{fig:FP_4d_NN_slice}
     \end{subfigure}
     \hfill
        \caption{Heat maps of cross-sectional $u_{sliced}$ for the neural network solution and the ground-truth solution at the origin for timesteps $t=\tau$ and $t=60\tau$ respectively. Our neural network solution for $u_{sliced}(x_1,x_2)$ becomes anisotropic, leaving $u_{sliced}(x_3,u_4)$ isotropic, as time passes. Since the ground-truth solution has this property, this pattern is a desiderata for approximated solutions.}
        \label{fig:FP_4d}
\end{figure}

Error estimation is given in Figure \ref{fig:FP_4d_error}. The right panel of Figure \ref{fig:FP_4d_error} is the plot of the mean of the relative $L^2$ error over time, i.e. $\frac{1}{K}\sum_{k=1}^K [\text{relative }L^2 \text{ error at }t=k\tau]$.

\begin{figure}[h]
    \centering
    \begin{subfigure}[b]{0.45\textwidth}
    \centering
        \includegraphics[width=\linewidth]{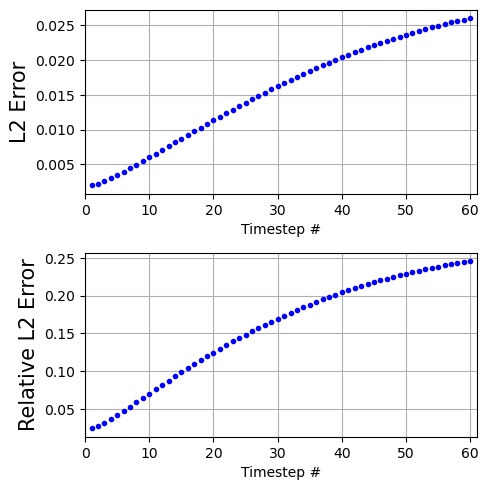}
    \end{subfigure}
    \begin{subfigure}[b]{0.45\textwidth}
    \centering
        \includegraphics[width=\linewidth]{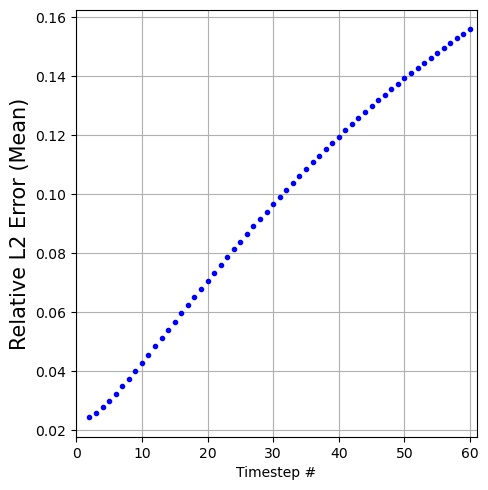}
    \end{subfigure}
\caption{Errors of the neural network solution for $4$-dimensional Fokker-Planck equation.}
\label{fig:FP_4d_error}
\end{figure}

\subsubsection{Effect of the convex regularization parameter \texorpdfstring{$\lambda$}{TEXT}}
We construct an additional experiment on the effect of the regularization parameter $\lambda$ in (\ref{W2objective}) for the 2-dimensional heat equation in Section \ref{subsubsec:2dheat}, where smaller or larger $\lambda$ leads to different results. This parameter approximates the convex-conjugate constraint by softly regularizing $R(\theta, \omega)$ term in (\ref{W2objective}). Here, we consider the same 2D Heat equation in Section \ref{subsubsec:2dheat}.

\paragraph{Model training details} We only changed $\lambda = 0.01, 0.1, 0.2, 0.5, 1, 2, 5, 10, 20, 50$, while keeping all other training parameters such as model size, epochs, and learning rate to the same as Section \ref{subsubsec:2dheat}. 

\begin{figure}[h]
    \centering
    \begin{subfigure}[b]{0.95\textwidth}
    \centering
        \includegraphics[width=\linewidth]{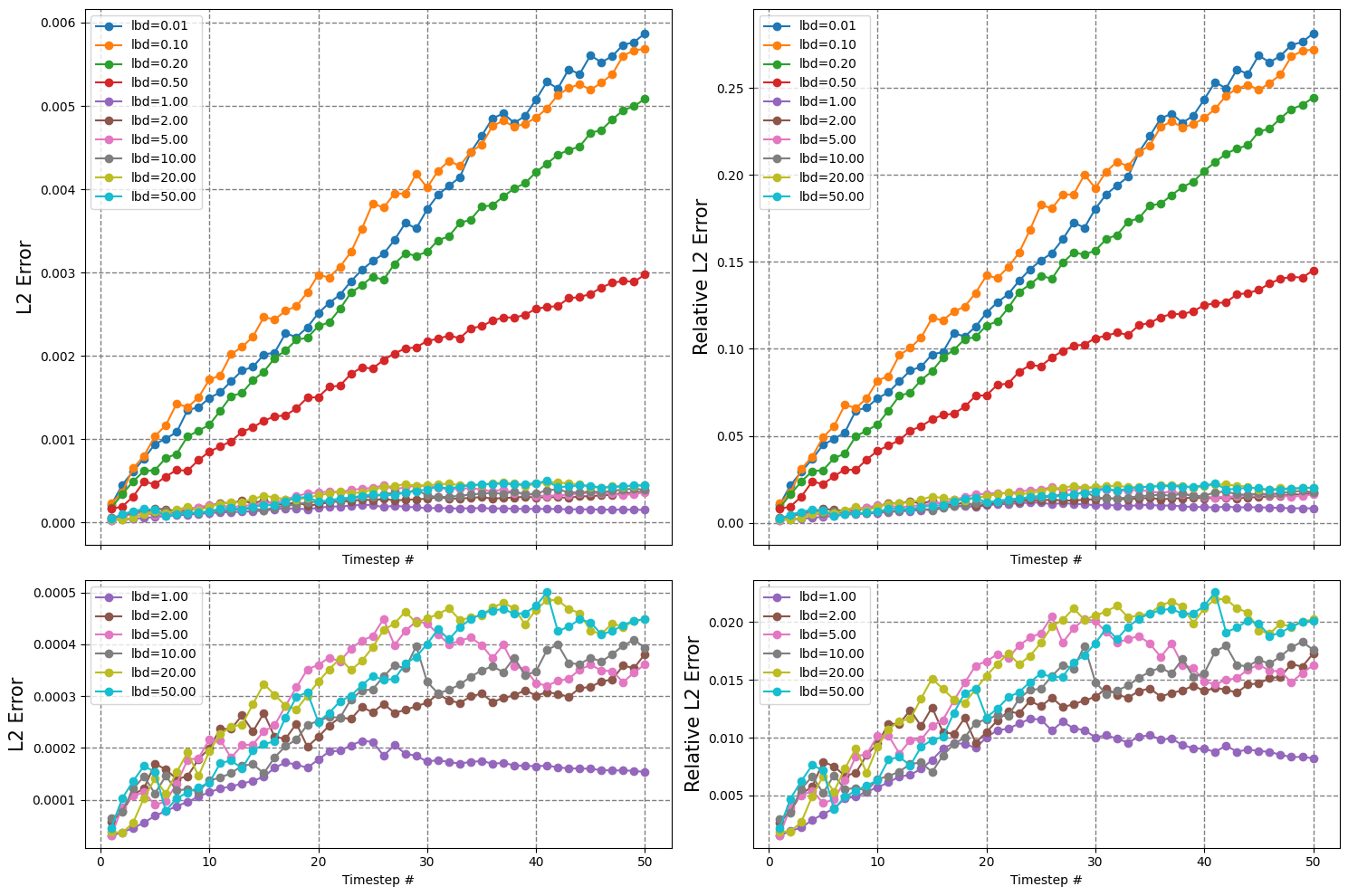}
    \end{subfigure}
\caption{Errors of the neural network solutions with different lambda values for a 2-dimensional heat equation. The results for $\lambda=0.01, 0.1, 0.2, 0.5, 1, 2, 5, 10, 20, 50$ are compared and plotted with different colors. The upper boxes show all cases, whereas the lower boxes show the cases $\lambda \geq 1$ for a detailed comparison.}
\label{fig:lambda_exp_error}
\end{figure}

\begin{figure}[h]
    \centering
    \begin{subfigure}[b]{0.95\textwidth}
    \centering
        \includegraphics[width=\linewidth]{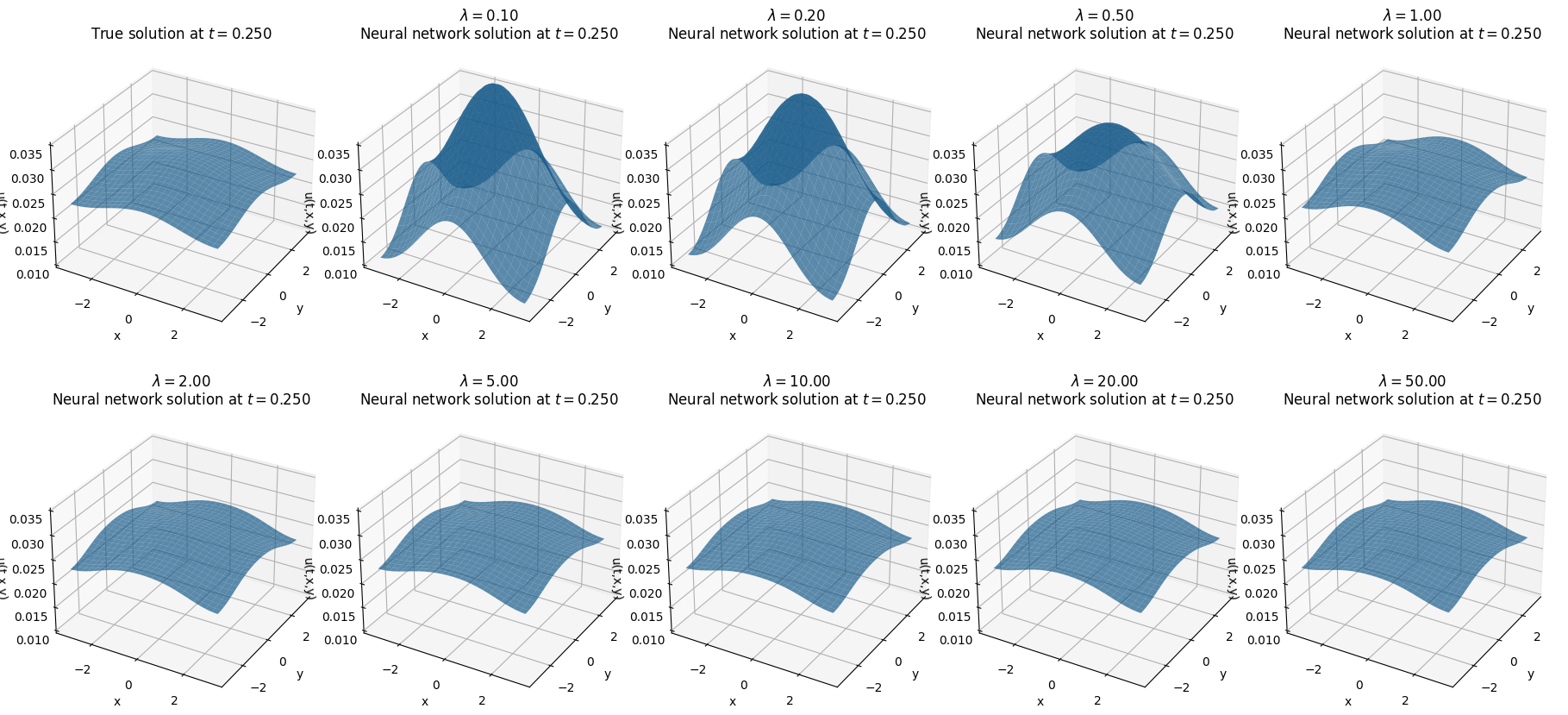}
    \end{subfigure}
\caption{A true solution and the neural network solutions with nine different $\lambda$ values for a 2-dimensional heat equation. The true solution and the results for $\lambda=0.1, 0.2, 0.5, 1, 2, 5, 10, 20, 50$ are visualized and plotted. The $\lambda=0.01$ case was omitted, as it can be easily checked to be failed by its high L2 error and relative L2 error.}
\label{fig:lambda_exp_solution}
\end{figure}

\paragraph{Results} The results for $\lambda = 1, 2, 5, 10, 20, 50$ achieved similar performance to the original result in Section \ref{subsubsec:2dheat}. Among the good results, setting $\lambda = 1$ shows the best performance. The relative error gets larger as the value of $\lambda$ increases from 1 to 50, but they were negligible compared to the error that occurred when the $\lambda$ value was small. The results for $\lambda = 0.01, 0.1, 0.2, 0.5$ failed to estimate $L^2$-Wasserstein distance between two distributions, which leads to a failed optimization process. This clearly implies that a small $\lambda$ value cannot regularize the convex-conjugate condition and leads to undesired optimization results. Detailed error plots can be found in Fig. \ref{fig:lambda_exp_error} and solutions for $t=0.25$ can be found in Fig. \ref{fig:lambda_exp_solution}.

Although the necessary value for $\lambda$ depends on the dimension of the $x$, in our $\mathbb{W}_2$-gradient flow experiments above we empirically observe that setting $\lambda=1$ for estimating $L^2$-Wassertein distance was enough. 

If the equation becomes more complex or the dimension of $x$ becomes higher, the value required for convex-conjugate regularization will become larger. This can be also empirically deduced from the previous study \cite{korotin2019wasserstein}, which chooses higher $\lambda$ as the task get more difficult. For example, the previous study set $\lambda$ to 1 for a 2-dimensional toy example, $\min(50, D)$ for a $D$-dimensional Gaussian optimal transport task, and 35,000 for an image-to-image style transfer task.

\section{Discussion}
In the experiments on several gradient flow equations above, the proposed mesh-free deep minimizing movement scheme achieved outstanding results for both low and high dimensions. Despite lack of mature error analysis, it has been known that neural networks are more fluent than the classical numerical methods because they are free from mesh-generation. By the series of experiments in this work, we verified that neural networks perform great for both low and high dimensions, and this indicates the possibility of applying our method to solving various PDEs related to gradient flows.

As pointed out in \cite{berg2018unified}, mesh generation often fails to scale when the domain has a complex geometry. Our approach for both $L^2$ and $\mathbb{W}_2$ cases, which is completely free of mesh generation, is therefore directly applicable for the gradient-flow type equation in a domain with complex geometry. Our method also takes advantage in that it can be directly applied not only to $L^2$ and $\mathbb{W}_2$ spaces, but also to any general space. Therefore, if some classes of PDEs are realized as gradient flows in some spaces other than $L^2$ and $\mathbb{W}_2$ in future, our method can be a breakthrough for numerically solving those equations.




\section*{Acknowledgement}
H. J. Hwang is supported by the National Research Foundation of Korea (NRF) grant funded by the Korea government (MSIT) (No. NRF-2017R1E1A1A03070105 and NRF-2019R1A5A1028324), Institute for Information \& Communications Technology 
Promotion (IITP) grant funded by the Korea government (MSIT) (No. 2019-0-01906,
Artificial Intelligence Graduate School Program (POSTECH)), and the Information 
Technology Research Center (ITRC) support program (No. IITP-2018-0-01441). H. Son is supported by National Research Foundation of Korea (NRF) grants funded by the Korean government (MSIT) (No. NRF-2019R1A5A10\allowbreak 28324).

\bibliographystyle{siamplain}
\bibliography{natbib}

\end{document}